\documentclass[letterpaper, 10pt, journal, twocolumn, final]{IEEEtran}

\usepackage{amsthm}
\usepackage{soul}
\usepackage[normalem]{ulem}
\usepackage[hidelinks]{hyperref}
\usepackage{bookmark}
\usepackage[latin1]{inputenc}
\usepackage[english]{babel}

\usepackage[dvipsnames]{xcolor}
\usepackage{amsfonts,amssymb,amsmath,color}
\usepackage[draft]{graphicx}
\usepackage[noadjust]{cite}
\usepackage{placeins}
\usepackage{graphicx}
\usepackage{algorithm}
\usepackage[]{algpseudocode}
\usepackage{dsfont}

\usepackage{pgfplots,tikz}
\usetikzlibrary{shapes,arrows,calc}

\usepackage{scalefnt}
\let\oldtextsc\textsc
\renewcommand{\textsc}[1]{\oldtextsc{\scalefont{0.9}#1}}

\newcommand\oprocendsymbol{\hbox{$\square$}}
\newcommand\oprocend{\relax\ifmmode\else\unskip\hfill\fi\oprocendsymbol}

\newtheorem{theorem}{Theorem}[section]
\newtheorem{proposition}[theorem]{Proposition}

\newtheorem{lemma}[theorem]{Lemma}

\newtheorem{assumption}[theorem]{Assumption}

\makeatletter
\newcommand{\StatexIndent}[1][3]{%
  \setlength\@tempdima{\algorithmicindent}%
  \Statex\hskip\dimexpr#1\@tempdima\relax}
\makeatother

\renewcommand{\inf}{\operatornamewithlimits{inf\vphantom{p}}}

\renewcommand{\lim}{\operatornamewithlimits{lim\vphantom{p}}}

\newcommand{\real}{{\mathbb{R}}}

\renewcommand{\natural}{{\mathbb{N}}}

\newcommand{\until}[1]{\{1,\ldots,#1\}} 
\newcommand{\interv}[1]{1,\ldots,#1} 

\newcommand{\map}[3]{#1: #2 \rightarrow #3}

\newcommand{\argmax}{\mathop{\rm argmax}}

\newcommand{\find}{\text{find}}
\newcommand{\subj}{\textnormal{subj. to}}
 
\newcommand{\nbrs}{\mathcal{N}}
\newcommand{\subgrad}{\widetilde{\nabla}}

\newcommand{\0}{\mathbf{0}}
\newcommand{\1}{\mathds{1}}

\newcommand{\EE}{\mathcal{E}}

\newcommand{\GG}{\mathcal{G}}

\newcommand{\tp}{\tilde{p}}

\newcommand{\oracle}{\textsc{Oracle}}
\newcommand{\ymax}{y^\textsc{max}}
\newcommand{\ymin}{y^\textsc{min}}

\newcommand{\xmax}{x^\textsc{max}}
\newcommand{\rhomax}{\rho^\textsc{max}}
\newcommand{\xmin}{x^\textsc{min}}
\newcommand{\rhomin}{\rho^\textsc{min}}

\newcommand{\tx}{\tilde{x}}

\newcommand{\trho}{\tilde{\rho}}

\newcommand{\by}{\bar{y}}

\graphicspath{{figs/}}

\def \algname/{Distributed Primal Decomposition with Costs Learning}
\def \algacronym/{DPD-Learn}

\begin{document}

\title{\huge Constraint-coupled Optimization with Unknown Costs: \\ A Distributed Primal Decomposition Approach}

\author{Andrea Camisa, %
  Alessia Benevento,
  Giuseppe Notarstefano %
  \thanks{A. Benevento is with the Department of Engineering,
  University of Salento, Lecce, Italy.
  \texttt{alessia.benevento@unisalento.it.}
  A. Camisa and G. Notarstefano are with the Department of Electrical,
  Electronic and Information Engineering, University of Bologna, Bologna, Italy.
  \texttt{\{a.camisa, giuseppe.notarstefano\}@unibo.it}.
  }
  \thanks{This result is part of a project that has received funding from the European 
  Research Council (ERC) under the European Union's Horizon 2020 research 
  and innovation programme (grant agreement No 638992 - OPT4SMART).
  }
}

\maketitle

\begin{abstract}
	In this paper, we present a distributed algorithm for solving convex,
	constraint-coupled, optimization problems over peer-to-peer networks.
	We consider a network of processors that aim to cooperatively minimize
	the sum of local cost functions, subject to individual constraints and
	to global coupling constraints. The major assumption of this work is that
	the cost functions are unknown and must be learned online.
	We propose a fully distributed algorithm, based on a primal decomposition
	approach, that uses iteratively refined data-driven estimations of
	the cost functions over the iterations. The algorithm is scalable and
	maintains private information of agents.
	We prove that, asymptotically, the distributed algorithm provides the
	optimal solution of the problem even though the true cost functions
	are never used within the algorithm.
	The analysis requires an in-depth exploration of the primal decomposition
	approach and shows that the distributed algorithm can be thought of
	as an epsilon-subgradient method applied to a suitable reformulation
	of the original problem.
	Finally, numerical computations corroborate the theoretical findings
	and show the efficacy of the proposed approach.
\end{abstract}

\begin{IEEEkeywords}
  Distributed Optimization, Cost Function Estimation, Constraint-Coupled Optimization
\end{IEEEkeywords}

\section{Introduction}

Last decades have seen an increasing interest in distributed optimization over
networks, due to the ubiquitous presence of networked structures~\cite{bullo2019lectures}
such as social influence networks~\cite{sasso2019interaction}, wireless sensor
networks~\cite{rabbat2005quantized} or multi-robot
systems~\cite{bullo2009distributed,cortes2017coordinated}.
In a distributed optimization framework, agents in a network aim to
\emph{cooperatively} minimize the sum of objective functions,
each one assigned to an agent of the network, subject to constraints.
Many works have concentrated on cost-coupled optimization, where the cost function
is the sum of local functions depending on a common decision
variable. An exemplary, non-exhaustive list of works addressing this problem
set-up is~\cite{nedic2009distributed, di2016next, varagnolo2015newton,mateos2017distributed}.
Follow-up works have addressed proposed a more challenging optimization
scenario, which we call constraint-coupled, in which each
local cost function depends on a local variable that is subject to a
local constraint, however it is also necessary to satisfy global coupling constraints
involving all the decision variables.
While the previous problem set-up is more related to estimation and machine learning,
the latter is more relevant to distributed control applications, see~\cite{notarstefano2019distributed}.
The solution of constraint-coupled optimization problems is typically achieved
by duality-based decomposition approaches~\cite{palomar2006tutorial,giselsson2013accelerated,notarnicola2019constraint,camisa2020distributed},
possibly using also penalty approaches~\cite{dinh2013dual} or smoothing techniques~\cite{tran2016fast}.

In this paper, we focus on a challenging constraint-coupled scenario in which
the local cost functions are unknown and have to be estimated online.
The problem of estimating unknown functions from observed samples has
been extensively studied and a number solutions have been proposed, see
e.g.~\cite{simonetto2020smooth, williams2006gaussian,schulz2018tutorial}.
The joint estimation and optimization represent several real-world situations, often occurring
in complex systems scenarios, such as in presence of autonomous underwater
vehicles (AUV) %
or mobile teams of robots~\cite{todescato2017multi,benevento2020multi}.
Similar problems exist even in the context of biological networks, where, e.g.,
groups of animals cooperate with each other for reaching a common goal,
such as locating food sources or avoinding predators. %

The contributions of this paper are as follows. We consider a distributed constraint-coupled
optimization set-up with unknown cost functions to be learned online.
Under the assumptions that the agents are able to build more and more
refined estimates of their cost functions, we propose a novel,
distributed algorithm to solve the problem \emph{exactly}.
The algorithm is inspired to a distributed primal decomposition approach
for constraint-coupled optimization~\cite{camisa2019distributed,camisa2020distributed},
where the algorithm has been suitably modified to account for the online
cost estimation mechanism.
The resulting scheme is a three-step procedure where each agent first
obtains an updated version of the cost estimation, then it solves
a local version of the original problem with the true cost function replaced
by an estimated version, and finally updates a local state after exchanging
dual information with neighbors. Interestingly, this algorithm scales with
the size of the network and avoids that private information of agents
(such as the constraints or the estimated solution) is disclosed. 
We prove that the algorithm asymptotically solves the problem even though
the true cost is never used. To obtain this result, we rely on an in-depth
analysis of the primal decomposition approach and on the consequences
of not using the true cost function. We show that the net effect of using
the estimated costs is that the distributed algorithm can be reinterpreted
as an \emph{epsilon-subgradient} method applied to a suitably obtained
reformulation of the original problem.
Finally, the theoretical findings are corroborated with numerical computations.

The paper is organized as follows. In Section~\ref{sec:prob_statement}
we describe the problem set-up. In Section~\ref{sec:distributed_algorithm}
we present our distributed algorithm. To analyze the algorithm, we first
present preliminary results in Section~\ref{sec:analysis_preliminary}
and then we conclude the analysis in Section~\ref{sec:analysis}.
Numerical computations are provided in Section~\ref{sec:simulations}.

\section{Problem Statement}
\label{sec:prob_statement}
In this section, we formalize the problem set-up studied in the paper
together with the needed assumptions.

\subsection{Distributed Constraint-coupled Optimization}
We consider a network of $N$ agents that must solve the
optimization problem
\begin{align}
\begin{split}
  \min_{x_1, \ldots, x_N} \: & \: \sum_{i=1}^N f_i(x_i)
  \\
  \subj \: & \: \sum_{i=1}^N g_i(x_i) \le b,
  \\
  & \: x_i \in X_i, \hspace{1cm} i \in \until{N},
\end{split}
\label{eq:problem_original}
\end{align}
where, for all $i \in \until{N}$, $x_i \in \real^{n_i}$ ($n_i \in \natural$) is the
$i$-th optimization variable with constraint set $X_i \subset \real^{n_i}$,
$\map{f_i}{\real^{n_i}}{\real}$ is the $i$-th cost function and
$\map{g_i}{\real^{n_i}}{\real}$ is the $i$-th contribution to the coupling
constraint, with right-hand side $b \in \real$.
Due to the presence of the constraint
$\sum_{i=1}^N g_i(x_i) \le b$
the optimization variables are entangled and a distributed solution
of the problem is not trivial.
For this reason, we term the structure of problem~\eqref{eq:problem_original}
\emph{constraint coupled}.

The following two assumptions guarantee that
\emph{(i)} the optimal cost of problem~\eqref{eq:problem_original} is finite
and at least one optimal solution exists,
\emph{(ii)} duality arguments are applicable.
\begin{assumption}[Convexity and compactness]
\label{ass:problem}
  For all $i \in \until{N}$, the set $X_i$ is non-empty, convex and compact, the
  function $f_i$ is convex and each component of $g_i$ is a convex function.
  \oprocend
\end{assumption}
\begin{assumption}[Slater's constraint qualification]
\label{ass:slater}
  There exist $\bar{x}_1 \in X_1, \ldots, \bar{x}_N \in X_N$ such that
  $\sum_{i=1}^N g_i(\bar{x}_i) < b$.
  \oprocend
\end{assumption}

We assume that each node $i$ does not know the entire problem information.
In particular, we assume it only knows the local constraint $X_i$, its contribution
$g_i$ to the coupling constraint and the right-hand side $b$.

The exchange of information among $N$ agents occurs according to a fixed
communication model. We use $\GG = (V, \EE)$ to indicate the undirected,
connected graph describing the network, where $V = \until{N}$ is the set of vertices
and $\EE$ is the set of edges.
If $(i,j)\in \EE$, then agent $i$ can communicate with agent $j$ and viceversa.
We use $\nbrs_i$ to indicate the set of neighbors of agent $i$ in $\GG$, i.e.,
$\nbrs_i = \{ j \in V | (i,j) \in \EE \}$.
The assumption of static graph can be relaxed to handle the more general case of
time-varying graphs as described in \cite{camisa2020distributed}.
However, this is not the main focus of this work, thus we prefer to maintain
the assumption of static network to keep the discussion simple.

\subsection{Unknown Cost Functions and their Estimation}
The main feature of the scenario considered in this paper is that the cost functions are \emph{not}
known in advance and must be estimated online. This challenging assumption can model,
for instance, situations in which evaluation of the cost function is computationally intensive
and can be done only for a small number of points. For this reason, we assume that agents
are equipped with an estimation mechanism that progressively refines their knowledge
of the objective functions.

We model the estimation mechanism as a black-box \emph{oracle} that can be queried
to provide estimations of the cost function. Since each agent $i$ has its own cost function,
we assume that each agent has its own instance of the oracle providing estimated
versions of the cost function $f_i(x_i)$. As these oracles will be embedded within the
iterative distributed algorithm introduced in Section~\ref{sec:distributed_algorithm},
we denote by $f_i^t(\cdot) = \oracle(i, t)$ the output of oracle $i$ at an iteration
$t \in \natural$. We do not impose a specific estimation mechanism, so that each agent
$i$ can use the most appropriate method depending on the cost function at hand.
The only assumption on the oracles needed for the distributed algorithm is formalized next.
\begin{assumption}[Oracles]
\label{ass:oracles}
  For each agent $i \in \until{N}$, the estimated functions $f_i^t(\cdot) = \oracle(i, t)$
  converge uniformly to the true cost function $f_i(\cdot)$.
  \oprocend
\end{assumption}

The main goal of the work is to propose a distributed algorithm such that the group
of agents simultaneously estimate the cost function and find a global optimal solution of
problem~\eqref{eq:problem_original}. To this end, the distributed algorithm makes use of
the estimated cost functions in place of the original ones, but nevertheless it will
be able to solve problem~\eqref{eq:problem_original} exactly.

\section{\algname/}
\label{sec:distributed_algorithm}
In this section, we describe the proposed distributed algorithm for
solving \eqref{eq:problem_original}.
Let $t \in \natural$ be an iteration index and let each agent maintain
a local estimate $x_i^t \in \real^{n_i}$ of the solution and
a local allocation $y_i^t \in \real$ of the coupling constraints.
Initially, the local allocation is initialized such that
$\sum_{i=1}^N y_i^0 = b$, e.g., $y_i^0 = \frac{b}{N}$.
At each iteration $t$, each agent $i$ first queries the oracle
to obtain a new, more refined estimation of the cost $f_i^t(\cdot)$.
Then, it solves a small, local optimization problem using
the estimated cost function and in particular it computes
both a primal solution $x_i^t$ and a dual solution $\mu_i^t$.
Finally, after exchanging $\mu_i^t$ with its neighbors,
the agent updates the local allocation $y_i^t$.
Algorithm~\ref{alg:algorithm} summarizes the distributed algorithm
from the perspective of node $i$, where we denote by $\alpha^t \ge 0$
the step-size, while the notation ``$\mu_i :$'' means that
the Lagrange multiplier $\mu_i$ is associated to the constraint
$g_i(x_i) \leq y_i^t+\rho_i$.
\begin{algorithm}[htbp]
\floatname{algorithm}{Algorithm}

  \begin{algorithmic}[0]
    
    \Statex \textbf{Initialization}: $y_i^0 \in \real$ such that $\sum_{i=1}^N y_i^0 = b$
    \medskip
    
    \Statex %
    \textbf{For} $t = 0, 1, 2, \ldots$
    \medskip

      \StatexIndent[0.75]
      \textbf{Query oracle} and obtain local cost estimation $f_i^t(\cdot) = \oracle(i, t)$
      \StatexIndent[0.75]
      \textbf{Compute} $((x_i^t, \rho_i^t), \mu_i^t)$ as a primal-dual pair of
      \begin{align}
			\begin{split}
			    \min_{x_i, \rho_i} \hspace{0.8cm} &\: f_i^t(x_i) + M \rho_i
			    \\
			    \subj \:
			    \:\: \mu_i :
			    & \: g_i(x_i) \leq y_i^t+\rho_i
			    \\
			    & \: x_i \in X_i
			\end{split}
      \label{eq:alg_local_prob}
      \end{align}
      
      \StatexIndent[0.75]
      \textbf{Gather} $\mu_j^t$ for $j \in \nbrs_i$ and update
      \begin{align}
        y_i^{t+1} = y_i^t + \alpha^t \sum_{j \in \nbrs_i^t} \big( \mu_i^t - \mu_j^t \big)
      \label{eq:alg_update}
      \end{align}

  \end{algorithmic}
  \caption{\algacronym/}
  \label{alg:algorithm}
\end{algorithm}

Let us outline some comments on the algorithm. Note that in
Algorithm~\ref{alg:algorithm} the true cost function never appears.
Instead, in the local problems~\eqref{eq:alg_local_prob},
only the estimated versions of the cost function are used.
When the actual cost function $f_i$ is complex, using a surrogate function
in place of the true one can significantly reduce the computational cost of
solving that step of the algorithm.
We also highlight that the amount of computation stays
constant as the size $N$ of the network grows, therefore the
algorithm is scalable. Moreover, since each agent only
exchanges dual information with the neighbors, no private
information (such as the local solution estimates $x_i^t$ or the
local constraints $X_i$) is disclosed during the algorithmic evolution.

Once again, we point out that there is no constraint on the
technique to be used for the learning part (i.e. the oracle),
as long as Assumption~\ref{ass:oracles} is met.
As $t$ increases, the estimated function $f_i^t$ approaches the
true function $f_i$, therefore the algorithm is expected to
recover some kind of consistency on the long run.
This is indeed the case as we will formally show in the next sections.
The main theoretical result is represented by Theorem~\ref{thm:convergence},
which formalizes the assumptions under which Algorithm~\ref{alg:algorithm}
solves problem~\eqref{eq:problem_original} to optimality.

\section{Algorithm Analysis: Preliminary Results}
\label{sec:analysis_preliminary}

In this section, we introduce preliminary results that are required
for the subsequent analysis. We first recall the primal decomposition
approach and then we elaborate novel results on the combination
of primal decomposition with the cost estimation mechanism.
To keep the notation light, in the proofs of this section we omit
the index $i$.

\subsection{Review of Primal Decomposition}
\label{subsec:review}

The main tool that we use to solve problem~\eqref{eq:problem_original}
in a distributed fashion is the primal decomposition
approach~\cite{silverman1972primal,bertsekas1999nonlinear}.
In particular, we consider a variant of this decomposition scheme
that is combined with a so-called relaxation approach.
This variant is particularly suited for distributed computation
and is described in~\cite{notarnicola2019constraint,camisa2020distributed}.
Let us briefly recall the main results that are needed for the
forthcoming analysis.

The approach is based on the following relaxation of the original
problem~\eqref{eq:problem_original},
\begin{align}
\begin{split}
  \min_{\substack{x_1, \ldots, x_N, \\ \rho_1, \ldots, \rho_N}} \: & \: \sum_{i=1}^N \left( f_i(x_i) + M \rho_i \right)
  \\
  \subj \: & \: \sum_{i=1}^N g_i(x_i) \le b + \sum_{i=1}^N \rho_i,
  \\
  & \: x_i \in X_i, \:\: \rho_i \ge 0 \hspace{0.5cm} i \in \until{N},
\end{split}
\label{eq:problem_relaxed}
\end{align}
where $M > 0$ is a scalar parameter. The additional variables $\rho_i$ permit
a violation of the coupling constraint (in this sense we say
that~\eqref{eq:problem_relaxed} is a relaxed version of~\eqref{eq:problem_original}),
while penalizing the cost through the term $M \sum_{i=1}^N \rho_i$.
For a sufficiently large $M > 0$, the optimal solutions of~\eqref{eq:problem_relaxed}
have zero violation, recovering the original solutions of~\eqref{eq:problem_original}%
\footnote{As highlighted in \cite{camisa2020distributed},
the formulation~\eqref{eq:problem_relaxed}, although equivalent to the original
one~\eqref{eq:problem_original}, is more convenient for the distributed solution
of the problem.}.
Problem~\eqref{eq:problem_relaxed} is then decomposed hierarchically
using the primal decomposition technique~\cite{silverman1972primal}.
Formally, for all $i \in \until{N}$ we introduce local \emph{allocation} vectors
$y_i \in \real$, which capture the utilization of coupling constraint by each agent
$i$, and define a \emph{master problem},
\begin{align}
\label{eq:primal_decomp_master}
\begin{split}
  \min_{y_1, \ldots, y_N} \: & \: \sum_{i=1}^N p_i(y_i)
  \\
  \subj \: & \: \sum_{i=1}^N y_i = b,
\end{split}
\end{align}
where, for all $i \in \until{N}$, the function $\map{p_i}{\real}{\real}$
is the optimal cost of the $i$-th \emph{subproblem},
\begin{align}
\label{eq:primal_decomp_subprob}
\begin{split}
  p_i(y_i) \triangleq \min_{x_i, \rho_i} \: & \: f_i(x_i) + M \rho_i
  \\
  \subj \: & \: g_i(x_i) \le y_i + \rho_i
  \\
  & \: x_i \in X_i, \:\: \rho_i \ge 0,
\end{split}
\end{align}
which is parametric in the value of the right-hand side of the constraint.
The equivalence between the original problem~\eqref{eq:problem_original}
and problems~\eqref{eq:primal_decomp_master}--\eqref{eq:primal_decomp_subprob}
is summarized in the next lemma.
\begin{lemma}[{\cite[Lemma 2.7, Lemma 2.8]{camisa2020distributed}}]
\label{lemma:relaxation_primal_decomposition}
	Let problem~\eqref{eq:problem_original} be feasible and let
	Assumptions~\ref{ass:problem} and~\ref{ass:slater} hold.
	Then, for a sufficiently large $M > 0$, problem \eqref{eq:primal_decomp_master}
	and \eqref{eq:problem_original} are equivalent, in the sense that
	\emph{(i)} the optimal costs are equal, \emph{(ii)} if
	$(x_1^\star, \ldots, x_N^\star)$ is an optimal solution
	of~\eqref{eq:problem_original} and $(y_1^\star, \ldots, y_N^\star)$
	is an optimal solution of~\eqref{eq:primal_decomp_master},
	then $(x_i^\star, 0)$ is an optimal solution of~\eqref{eq:primal_decomp_subprob}
	(with $y_i = y_i^\star$ for all $i \in \until{N}$).
\oprocend
\end{lemma}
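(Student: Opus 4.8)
The plan is to prove the equivalence in the two stages suggested by the construction: first connect the original problem~\eqref{eq:problem_original} to the relaxed problem~\eqref{eq:problem_relaxed} via an exact-penalty argument, and then connect~\eqref{eq:problem_relaxed} to the master/subproblem pair~\eqref{eq:primal_decomp_master}--\eqref{eq:primal_decomp_subprob} via the primal-decomposition splitting. Part~(i) will follow by chaining these two equivalences, and part~(ii) will be extracted from the optimality conditions that link the master and the subproblems.

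For the first stage, I would exploit Assumptions~\ref{ass:problem} and~\ref{ass:slater}. Compactness and convexity give existence of an optimizer and finiteness of the optimal cost, while Slater's condition guarantees strong duality for~\eqref{eq:problem_original} and a bounded set of optimal multipliers for the coupling constraint; in particular it yields a finite bound $\mu^{\max}$ on the norm of any such multiplier. I would then argue that taking $M > \mu^{\max}$ makes the penalty exact: at any optimum of~\eqref{eq:problem_relaxed} the aggregate violation $\sum_i \rho_i$ must vanish, since otherwise driving the $\rho_i$ toward zero lowers the penalty term $M \sum_i \rho_i$ faster than re-tightening the coupling constraint can raise the remaining cost, the rate of the latter being controlled by the dual bound. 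Consequently the optimal costs of~\eqref{eq:problem_original} and~\eqref{eq:problem_relaxed} coincide, and every optimal solution of~\eqref{eq:problem_relaxed} has zero violation and hence solves~\eqref{eq:problem_original}.

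For the second stage I would show that~\eqref{eq:problem_relaxed} and the decomposed pair have the same optimal cost. The crux is the equivalence between the single coupling $\sum_i g_i(x_i) \le b + \sum_i \rho_i$ and the per-agent constraints $g_i(x_i) \le y_i + \rho_i$ together with $\sum_i y_i = b$: summing the per-agent constraints recovers the coupling, and conversely, given any feasible $(x_i, \rho_i)$ of~\eqref{eq:problem_relaxed}, the nonnegative slack $b - \sum_i \big( g_i(x_i) - \rho_i \big)$ can be distributed to produce allocations $y_i \ge g_i(x_i) - \rho_i$ with $\sum_i y_i = b$. Minimizing first over $(x_i, \rho_i)$ at fixed $y_i$ (which is exactly the definition of $p_i(y_i)$ in~\eqref{eq:primal_decomp_subprob}) and then over the allocations subject to $\sum_i y_i = b$ reproduces~\eqref{eq:primal_decomp_master}, so the optimal costs match. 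Combining the two stages establishes part~(i).

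For part~(ii), I would argue through the master optimality conditions. Each $p_i$ is convex, being the optimal value of a convex program as a function of its right-hand side, and the optimal subproblem multiplier satisfies $-\mu_i^\star \in \partial p_i(y_i^\star)$; optimality of $y^\star$ for the equality-constrained master~\eqref{eq:primal_decomp_master} then forces all these subgradients to share a common value, i.e. $\mu_i^\star$ is independent of $i$. Using zero violation from the first stage together with the matching of costs from part~(i), the identity $\sum_i p_i(y_i^\star) = \sum_i f_i(x_i^\star)$ holds, and the per-agent inequalities $p_i(y_i^\star) \le f_i(x_i^\star)$ can then be pinned to equalities, identifying $(x_i^\star, 0)$ as an optimizer of the $i$-th subproblem. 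I expect the main obstacle to be the first stage, namely making \emph{sufficiently large $M$} precise: one must produce the explicit multiplier bound from Slater's condition and verify rigorously that it forces zero violation, rather than merely invoking exact penalization. A secondary delicate point is securing the per-agent feasibility $g_i(x_i^\star) \le y_i^\star$ that underlies the inequalities $p_i(y_i^\star) \le f_i(x_i^\star)$, which does not follow from the aggregate coupling alone and must be obtained from the subgradient-consensus condition on the master.
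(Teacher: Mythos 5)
The paper itself gives no proof of this lemma: it is imported verbatim from \cite[Lemmas 2.7--2.8]{camisa2020distributed}, so there is no internal argument to compare against. Your two-stage plan for part \emph{(i)} is the standard route and is sound: Slater's condition bounds the optimal multiplier of the coupling constraint, taking $M$ above that bound makes the penalty in~\eqref{eq:problem_relaxed} exact and forces $\sum_i \rho_i = 0$ at any optimum, and the nested-minimization identity (minimize over $(x_i,\rho_i)$ at fixed $y_i$, then over allocations with $\sum_i y_i = b$, distributing the nonnegative slack to pass between the aggregate and the per-agent constraints) equates the optimal values of~\eqref{eq:problem_relaxed} and~\eqref{eq:primal_decomp_master}. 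No objection there.

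Part \emph{(ii)}, however, has a genuine gap, and it sits exactly at the point you flag as ``delicate'': the per-agent feasibility $g_i(x_i^\star) \le y_i^\star$ cannot be rescued by the subgradient-consensus condition on the master, because the statement is false for an arbitrary, \emph{independently chosen} pair of optimizers. Take $N=2$, $f_i \equiv 0$, $g_i(x_i) = x_i$, $X_i = [0,1]$, $b = 2$: every $x \in [0,1]^2$ solves~\eqref{eq:problem_original}, one computes $p_i(y_i) = M\max\{0,-y_i\}$, and $y^\star = (0,2)$ is master-optimal with consensus multiplier $\mu_1^\star = \mu_2^\star = 0$; yet for the optimal $x^\star = (1,1)$ the point $(x_1^\star, 0)$ violates $g_1(x_1^\star) \le y_1^\star$ and is not even feasible for the first subproblem. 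So no proof of the literal pairing can close. The provable (and intended, cf.\ how the result is used in Theorem~\ref{thm:convergence}) content is either: (a) given $x^\star$, the allocation $\hat y_i = g_i(x_i^\star) + s_i$ obtained by distributing the slack $b - \sum_i g_i(x_i^\star) \ge 0$ is master-optimal and $(x_i^\star, 0)$ solves subproblem $i$ at $\hat y_i$ --- which follows from your own stage-two construction via the sandwich $\sum_i p_i(\hat y_i) \le \sum_i f_i(x_i^\star) = f^\star \le \sum_i p_i(\hat y_i)$ together with the per-agent inequalities $p_i(\hat y_i) \le f_i(x_i^\star)$; or (b) the converse direction: from any master-optimal $y^\star$, optimal subproblem solutions have $\rho_i = 0$ (for $M$ large) and assemble into an optimizer of~\eqref{eq:problem_original}. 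Your write-up should target one of these formulations rather than the arbitrary pairing in the statement.
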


We also recall two useful results on the functions $p_i(y_i)$.
Throughout the analysis, we will use the superscript $\cdot^\prime$
to indicate both sub-derivatives and derivatives.
Whether the symbol denotes a derivative or a subderivative
will always be clear from the context.
The first lemma provides an operative way to compute
subderivatives of $p_i(y_i)$.
\begin{lemma}[{\cite[Section 5.4.4]{bertsekas1999nonlinear}}]
  Let $y_i \in \real$ be given and let Assumptions~\ref{ass:problem}
  and~\ref{ass:slater} hold.
  Then, a subderivative of $p_i$ at $y_i$,
  denoted $p_i^\prime(y_i)$, can be computed as
  \begin{align}
    p_i^\prime(y_i) = -\mu_i(y_i),
  \end{align}
  where $\mu_i(y_i)$ denotes a Lagrange multiplier of
  problem~\eqref{eq:primal_decomp_subprob} associated
  to the constraint $g_i(x_i) \leq y_i$.
  \oprocend
\label{lemma:primal_function_subgradient}
\end{lemma}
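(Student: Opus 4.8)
The plan is to treat $p_i$ as the optimal-value (perturbation) function of a convex program in which $y_i$ enters only as the right-hand side of a single inequality, and then to invoke the standard sensitivity relation between the subdifferential of such a function and its optimal Lagrange multipliers. Concretely, $p_i(y_i)$ is the minimum in~\eqref{eq:primal_decomp_subprob}, where $x_i \in X_i$ and $\rho_i \ge 0$ play the role of an abstract (non-perturbed) constraint set and the only constraint carrying $y_i$ is $g_i(x_i) \le y_i + \rho_i$. I would therefore dualize exclusively this coupling-type constraint, keeping $x_i \in X_i$ and $\rho_i \ge 0$ implicit, and express $p_i$ through its dual.

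First I would form the partial Lagrangian
\begin{align*}
L(x_i,\rho_i,\mu_i) = f_i(x_i) + M\rho_i + \mu_i\big(g_i(x_i) - y_i - \rho_i\big),
\end{align*}
with $\mu_i \ge 0$, and minimize over $x_i \in X_i$ and $\rho_i \ge 0$. The term in $\rho_i$ is $(M-\mu_i)\rho_i$, so the infimum over $\rho_i \ge 0$ is finite (and attained at $\rho_i = 0$) precisely when $\mu_i \le M$; otherwise the dual function is $-\infty$. Writing $q_0(\mu_i) \triangleq \inf_{x_i \in X_i}\big(f_i(x_i) + \mu_i g_i(x_i)\big)$, this yields the dual function $q_0(\mu_i) - \mu_i y_i$ on $0 \le \mu_i \le M$, and hence, by strong duality,
\begin{align*}
p_i(y_i) = \max_{0 \le \mu_i \le M}\big(q_0(\mu_i) - \mu_i y_i\big).
\end{align*}
Strong duality and attainment of the maximizer follow from Assumption~\ref{ass:problem} together with a Slater point for~\eqref{eq:primal_decomp_subprob}; such a point is automatic here, since for any $x_i \in X_i$ one can choose $\rho_i$ large enough that $g_i(x_i) < y_i + \rho_i$ strictly, so the subproblem always satisfies its own constraint qualification.

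The key observation is then that the right-hand side above is a pointwise maximum, over $\mu_i$, of functions that are affine in $y_i$ with slope $-\mu_i$. Letting $\mu_i(y_i)$ denote a maximizer, for every $y_i' \in \real$ I would write
\begin{align*}
p_i(y_i') \ge q_0(\mu_i(y_i)) - \mu_i(y_i)\, y_i' = p_i(y_i) - \mu_i(y_i)\big(y_i' - y_i\big),
\end{align*}
where the inequality holds because $\mu_i(y_i) \in [0,M]$ remains dual-feasible for the parameter $y_i'$ (the feasible interval does not depend on $y_i$), and the equality uses $p_i(y_i) = q_0(\mu_i(y_i)) - \mu_i(y_i) y_i$. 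This is exactly the subgradient inequality certifying $-\mu_i(y_i) \in \partial p_i(y_i)$, which is the claim.

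I expect the only delicate points to be bookkeeping rather than conceptual. The main one is justifying strong duality and the existence of an optimal multiplier rigorously: convexity and compactness of $X_i$ give a finite optimal value, and with the automatic Slater point above the set of optimal multipliers is nonempty and bounded. The secondary point is reconciling the statement's phrasing, which attaches $\mu_i(y_i)$ to $g_i(x_i) \le y_i$, with the constraint $g_i(x_i) \le y_i + \rho_i$ actually appearing in~\eqref{eq:primal_decomp_subprob}; this is harmless because, for $M$ large as in Lemma~\ref{lemma:relaxation_primal_decomposition}, the optimal $\rho_i$ equals zero, so the relevant constraint is indeed $g_i(x_i) \le y_i$ and $\mu_i$ is its multiplier.
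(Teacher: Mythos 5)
Your proof is correct and follows essentially the same route as the source the paper cites for this lemma (Bertsekas, Section 5.4.4): dualize only the constraint carrying $y_i$, use the automatic Slater point obtained by taking $\rho_i$ large to get strong duality and a nonempty multiplier set, and read off $-\mu_i(y_i)$ as a subgradient from the fact that $p_i(y_i)=\max_{0\le\mu_i\le M}\big(q_0(\mu_i)-\mu_i y_i\big)$ is a pointwise maximum of affine functions of $y_i$ over a $y_i$-independent feasible set; this is also exactly the dual derivation the paper itself carries out for $q_i^t$ and in its appendix. The only slightly inaccurate point is your closing side remark that the optimal $\rho_i$ is zero for large $M$ --- at the subproblem level this fails for $y_i<\ymin_i$ --- but it is inessential, since your core argument correctly treats $\mu_i(y_i)$ as the multiplier of $g_i(x_i)\le y_i+\rho_i$ (the lemma's phrasing ``$g_i(x_i)\le y_i$'' is simply loose).
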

Note that Lagrange multipliers can be computed as dual optimal
solutions of problem~\eqref{eq:primal_decomp_subprob}.
The second lemma provides bounds on the subderivatives of
$p_i(y_i)$.
\begin{lemma}
\label{lemma:p_bounded_negative_subgrad}
  For all $i \in \until{N}$ and for all $y_i \in \real$, the subderivatives
  of $p_i$ satisfy $-M \le p_i^\prime(y_i) \le 0$.
  \oprocend
\end{lemma}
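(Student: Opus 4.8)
The plan is to translate the statement into a bound on Lagrange multipliers and then read it off from the structure of the subproblem. By Lemma~\ref{lemma:primal_function_subgradient} we have $p_i'(y_i) = -\mu_i(y_i)$, where $\mu_i(y_i)$ is a dual optimal multiplier of subproblem~\eqref{eq:primal_decomp_subprob} associated with the coupling constraint. Hence the claim $-M \le p_i'(y_i) \le 0$ is exactly equivalent to $0 \le \mu_i(y_i) \le M$, and it suffices to establish these two bounds on the multiplier. As in the rest of the section, I would suppress the index $i$.

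The lower bound $\mu(y) \ge 0$, equivalently $p'(y) \le 0$, is immediate: $\mu$ is the multiplier of the \emph{inequality} constraint $g(x) \le y + \rho$, so dual feasibility forces it to be nonnegative. Intuitively this also matches monotonicity of $p$, since relaxing the right-hand side $y$ can only decrease the optimal cost, so a subderivative cannot be positive.

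The upper bound $\mu(y) \le M$, equivalently $p'(y) \ge -M$, is the substantive part, and I would extract it from the linear dependence of the subproblem on the relaxation variable $\rho$. Dualizing only the coupling constraint, the dual function is
\begin{align*}
  q(\mu) = \min_{x \in X,\, \rho \ge 0} \big[\, f(x) + M\rho + \mu\big(g(x) - y - \rho\big) \big].
\end{align*}
The inner minimization contains the term $(M - \mu)\rho$; as soon as $\mu > M$ this term is driven to $-\infty$ by letting $\rho \to +\infty$, so $q(\mu) = -\infty$. Since any dual optimal multiplier must make $q$ finite (equal to the finite optimal value $p(y)$), the multiplier furnished by Lemma~\ref{lemma:primal_function_subgradient} necessarily satisfies $\mu(y) \le M$.

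The only delicate point is to ensure that a finite dual optimal multiplier exists in the first place, so that the preceding argument applies for every $y \in \real$. This is guaranteed because the relaxed subproblem always admits a Slater point: for an arbitrary $\bar x \in X$ one can choose $\rho > 0$ large enough that $g(\bar x) < y + \rho$ holds strictly while $\rho > 0$. Together with Assumption~\ref{ass:problem}, this strict feasibility yields strong duality and attainment of the dual optimum, so $\mu(y)$ is well defined and the bounds $0 \le \mu(y) \le M$ hold for all $y$, completing the proof.
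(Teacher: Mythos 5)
Your proposal is correct and follows essentially the same route as the paper's own proof in Appendix~\ref{app:proof_primal_func_subgradient}: both reduce the claim to $0 \le \mu_i(y_i) \le M$ via Lemma~\ref{lemma:primal_function_subgradient}, and both obtain the upper bound by observing that the term $(M-\mu_i)\rho_i$ drives the dual function to $-\infty$ whenever $\mu_i > M$, so the dual problem is effectively $\max_{0 \le \mu_i \le M} q_i(\mu_i)$. Your added remark on strict feasibility of the relaxed subproblem (choosing $\rho$ large) is a sensible extra check of dual attainment that the paper leaves implicit.
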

The proof of Lemma~\ref{lemma:p_bounded_negative_subgrad} relies on
duality-based arguments similar to the ones in~\cite{notarnicola2019constraint}.
For completeness, we report it in Appendix~\ref{app:proof_primal_func_subgradient}.

\subsection{Properties of the Primal Functions}
Central to the analysis is the role of the functions $p_i(y_i)$.
In this section, we explore more deeply their structure.
As explained in~\cite[Section 5.4.4]{bertsekas1999nonlinear}, such function is
also called \emph{primal function} of the optimization
problem~\eqref{eq:primal_decomp_subprob} and has many
important properties (such as convexity).
Let us define for all $i \in \until{N}$ the scalars
\begin{subequations}
\begin{align}
  \ymin_i &\triangleq \min_{x_i \in X_i} \: g_i(x_i),
  \label{eq:y_min_def}
  \\
  \ymax_i &\triangleq \max_{x_i \in X_i} \: g_i(x_i),
  \label{eq:y_max_def}
\end{align}%
\end{subequations}
from which it directly follows that any locally feasible solution
$x_i \in X_i$ satisfies $\ymin_i \le g_i(x_i) \le \ymax_i$.

The next important lemma
regards the structure of the primal functions, which is
graphically represented in Figure~\ref{fig:p_prop}.
Intuitively, the numbers $\ymin_i$, $\ymax_i$ represent the minimum and
maximum resource that each agent $i$ can use.
As the allocation $y_i$ ranges from $\ymin_i$ to $\ymax_i$,
the optimal cost of the subproblem~\eqref{eq:primal_decomp_subprob}
decreases since the constraint $g_i(x_i) \le y_i + \rho_i$ becomes
less and less stringent. Eventually, for allocations greater than
$\ymax_i$, the cost cannot be further improved and $p_i(y_i)$
becomes constant. Instead, if $y_i \le \ymin_i$, optimal solutions
to problem~\eqref{eq:primal_decomp_subprob}
must compensate for the gap $\ymin_i - y_i$ with an appropriate
choice of $\rho_i$. The cost penalty $M \rho_i$ gives rise to the
linear behavior.
\begin{figure}[htbp]\centering
  \includegraphics[scale=1]{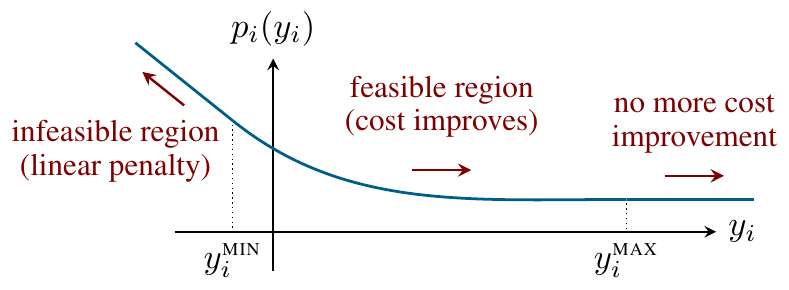}
  \caption{
    Illustration of Lemma~\ref{lemma:primal_function_properties}.
    See the text for details.
  }
\label{fig:p_prop}
\end{figure}
\begin{lemma}
\label{lemma:primal_function_properties}
  For all $i \in \until{N}$, the primal function $p_i(y_i)$ satisfies
  the following properties:
  \begin{enumerate}
    \item $p_i(y_i) = p_i(\ymax_i)$ for all $y_i \ge \ymax_i$;
    \item $p(y_i) = -M y_i + q_i$ for all $y_i \le y_i^\textsc{min}$,
  \end{enumerate}
  with $q_i = p_i(\ymin_i) + M \ymin_i$.
\end{lemma}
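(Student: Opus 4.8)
The plan is to exploit the scalar structure of subproblem~\eqref{eq:primal_decomp_subprob} and, in each of the two regimes, eliminate the auxiliary variable $\rho$ in closed form, thereby reducing $p$ to a minimization over $x \in X$ alone. Throughout I drop the index $i$ following the section convention, and I repeatedly invoke the elementary bound $\ymin \le g(x) \le \ymax$, valid for every $x \in X$, which is immediate from the definitions~\eqref{eq:y_min_def}--\eqref{eq:y_max_def}.

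For property (i), I would fix $y \ge \ymax$. For every feasible $x \in X$ one has $g(x) \le \ymax \le y$, so the coupling constraint $g(x) \le y + \rho$ already holds at $\rho = 0$. Since $M > 0$, the objective $f(x) + M\rho$ is strictly increasing in $\rho$, so no positive $\rho$ can improve the cost and the minimization over $\rho \ge 0$ is attained at $\rho = 0$. Hence $p(y) = \min_{x \in X} f(x)$, which is independent of $y$ throughout the range $y \ge \ymax$; in particular it coincides with $p(\ymax)$, establishing the claim.

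For property (ii), I would fix $y \le \ymin$ and minimize first over $\rho$ for a fixed $x \in X$. Because $g(x) \ge \ymin \ge y$, the constraint $g(x) \le y + \rho$ forces $\rho \ge g(x) - y \ge 0$, so the smallest admissible value is $\rho = g(x) - y$, at which the cost equals $f(x) + M\big(g(x) - y\big)$. Minimizing over $x$ then yields $p(y) = \min_{x \in X}\big(f(x) + M g(x)\big) - M y$, an affine function of $y$ with slope $-M$. Evaluating this expression at $y = \ymin$ identifies the constant as $\min_{x \in X}\big(f(x) + M g(x)\big) = p(\ymin) + M \ymin = q$, which completes the argument.

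I do not anticipate a genuine obstacle: both reductions are exact precisely because $g$ is scalar and uniformly bounded by $\ymin$ and $\ymax$ on the compact set $X$ (Assumption~\ref{ass:problem}). The only point requiring a line of care is checking that the affine formula of part (ii) remains valid at the boundary $y = \ymin$, so that the constant $q$ is read off consistently; this is fine because the derivation uses only $g(x) \ge y$, which still holds at $y = \ymin$.
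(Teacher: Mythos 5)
Your proof is correct, but it follows a genuinely different route from the paper's. The paper fixes an optimal pair at the boundary value ($(\xmax,\rhomax)$ at $y=\ymax$ for part (i), $(\xmin,\rhomin)$ at $y=\ymin$ for part (ii)), verifies that a suitably shifted copy of it is feasible for the new allocation $\by$, and then rules out any better candidate by contradiction. You instead eliminate $\rho$ in closed form for each fixed $x\in X$, which collapses the subproblem~\eqref{eq:primal_decomp_subprob} to a minimization over $x$ alone and yields the explicit expressions $p(y)=\min_{x\in X}f(x)$ for $y\ge\ymax$ and $p(y)=\min_{x\in X}\big(f(x)+Mg(x)\big)-My$ for $y\le\ymin$. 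Your argument is shorter, avoids the contradiction step, and actually proves slightly more than the lemma states, since it identifies the constant pieces explicitly rather than only relating $p(\by)$ to its boundary values; the one hypothesis it leans on that the paper's proof does not make explicit is that the inner minimum over $x$ is attained, which holds because $X$ is compact and $f,g$ are real-valued convex (hence continuous) by Assumption~\ref{ass:problem}. The paper's construction, on the other hand, has the minor advantage of exhibiting concrete optimal solutions $(\xmax,\rhomax)$ and $(\xmin,\rhomin+\ymin-\by)$ of the shifted subproblems, which is occasionally convenient when tracking minimizers rather than optimal values. Both are complete proofs of the stated claim.
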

\begin{proof}
  Let us show \emph{(i)}. Let $\by \ge \ymax$ and let $(\xmax, \rhomax)$ be an
  optimal solution of problem~\eqref{eq:primal_decomp_subprob} with $y = \ymax$.
  To prove that $p(\by) = p(\ymax)$, we must prove that $(\xmax, \rhomax)$ is an
  optimal solution of problem~\eqref{eq:primal_decomp_subprob} when $y = \by$.
  By construction, it holds
  \begin{align*}
    g(\xmax) \le \ymax + \rhomax \le \by + \rhomax
  \end{align*}
  thus $(\xmax, \rhomax)$ is a feasible solution. Suppose that it is not optimal,
  then there exists $(\tx, \trho)$ such that $\tx \in X$, $\trho \ge 0$,
  $g(\tx) \le \by + \trho$ and
  \begin{align*}
    f(\tx) + M\trho < f(\xmax) + M \rhomax,
  \end{align*}
  i.e., $(\tx, \trho)$ has a lower cost than $(\xmax, \rhomax)$.
  However, by~\eqref{eq:y_max_def} and $\trho \ge 0$ we have
  \begin{align*}
    g(\tx) \le \max_{x_i \in X_i} g_i(x_i) = \ymax \le \ymax + \trho
  \end{align*}
  and $(\tx, \trho)$ would be a feasible solution
  for problem~\eqref{eq:primal_decomp_subprob} with $y = \ymax$
  with a cost lower than $(\xmax, \rhomax)$, contradicting the assumption
  that $(\xmax, \rhomax)$ is optimal.
  
  Now we prove \emph{(ii)}.
  Let $\by \le y^\textsc{min}$ and let $(\xmin, \rhomin)$ be optimal solution of
  problem~\eqref{eq:primal_decomp_subprob} with $y = \ymin$.
  It holds $p(\ymin) = f(\xmin) + M\rhomin$ and $g(\xmin) \leq \ymin+\rhomin$.
  The goal is to show that
  \begin{align*}
    p(\by)
    &= p(\ymin) + M (\ymin - \by)
    \\
    &= f(\xmin) + M(\rhomin + \ymin - \by),
  \end{align*}
  i.e. that $(\xmin, \rhomin + \ymin-\by)$ is an
  optimal solution of problem~\eqref{eq:primal_decomp_subprob} with
  $y = \by$.
  By using the assumption on $(\xmin, \rhomin)$ and the fact that
  $\ymin-\by\ge 0$ (by~\eqref{eq:y_min_def}), we can immediately show feasibility,
  \begin{align*}
    g(\xmin)
    &\le \ymin + \rhomin
    \\
    &\le \ymin + \rhomin + \ymin - \by .
  \end{align*}
  Suppose that $(\xmin, \rhomin + \ymin-\by)$ is not
  optimal for problem~\eqref{eq:primal_decomp_subprob} with
  $y = \by$. Then, there exists $(\tx, \trho)$ such that $\tx \in X$,
  $\trho \ge 0$, $g(\tx) \le \by + \trho$ and
  \begin{align*}
    f(\tx) + M\trho
    &< f(\xmin) + M(\trho + \ymin-\by)
    \\
    &= p(\ymin) + M(\ymin-\by),
  \end{align*}
  from which it follows that $f(\tx) + M(\trho + \ymin-\by) < p(\ymin)$,
  i.e., the vector $(\tx, \trho + \ymin-\by)$ has a lower cost than
  $(\xmin, \rhomin)$. Moreover, using again $\ymin - \by \ge 0$,
  we obtain
  \begin{align*}
    g(\tx)
    \le \by + \trho
    \le \by + \trho + \ymin - \by,
  \end{align*}
  from which it follows that $(\tx, \trho + \ymin-\by)$ is a feasible solution
  for problem~\eqref{eq:primal_decomp_subprob} with $y = \by$
  with a cost lower than $(\xmin, \rhomin)$, contradicting the assumption
  that $(\xmin, \rhomin)$ is optimal. 
\end{proof}

In the forthcoming analysis, the properties of the primal functions
highlighted by Lemma~\ref{lemma:primal_function_properties} will
be linked to the cost estimation mechanism.

\subsection{Uniform convergence of estimated subgradients}
In Section~\ref{sec:distributed_algorithm} we have seen that, being the cost function
unknown, each agent uses a surrogate function in problem~\eqref{eq:alg_local_prob}
in place of the actual objective. Next we provide a sequence of results that show that
the estimated subgradients of the primal functions converge to the true ones.
This fact will be necessary in the proof of Theorem~\ref{thm:convergence}
to assess that Algorithm~\ref{alg:algorithm} can asymptotically find
an optimal allocation of problem~\eqref{eq:primal_decomp_master}.
Similarly to the definition of the primal function~\eqref{eq:primal_decomp_subprob},
let us define $p_i^t(y_i)$ as the optimal cost of the subproblem with the surrogate
function at time $t$ for a given allocation $y_i \in \real$, i.e.,
\begin{align}
\begin{split}
  p_i^t(y_i) \triangleq \min_{x_i, \rho_i} \: & \: f_i^t(x_i) + M \rho_i
  \\
  \subj \: & \: g_i(x_i) \le y_i + \rho_i
  \\
  & \: x_i \in X_i, \:\: \rho_i \ge 0.
\end{split}
\label{eq:primal_decomp_subproblem_est}
\end{align}
We will work with the dual problems associated
to problems~\eqref{eq:primal_decomp_subprob} and~\eqref{eq:primal_decomp_subproblem_est}.
Let us derive the dual problem associated to~\eqref{eq:primal_decomp_subproblem_est}
(the procedure for problem~\eqref{eq:primal_decomp_subprob} follows similar arguments).
Let us compute the dual function,
\begin{align*}
  q_i^t(\mu_i)
  &= \!\inf_{x_i \in X_i, \: \rho_i \ge 0} \! \big[ f_i^t(x_i) + M \rho_i + \mu_i \big( g_i(x_i) - y_i - \rho_i \big) \big]
  \\
  &= \begin{cases}
    \displaystyle\min_{x_i \in X_i} \big[ f_i^t(x_i) + \mu_i \big( g_i(x_i) - y_i \big)\big]
      & \text{if } \mu_i \le M
    \\
    -\infty & \text{otherwise},
  \end{cases}
\end{align*}
where we replaced $\inf$ with $\min$ since $X_i$ is compact.
Note that $\mu_i \le M$ is the domain associated to the dual function.
The dual problem associated to~\eqref{eq:primal_decomp_subproblem_est} is
\begin{align}
\begin{split}
  \max_{\mu_i} \: & \: q_i^t(\mu_i)
  \\
  \subj \: & \: 0 \le \mu \le M,
\end{split}
\label{eq:dual_prob_est}
\end{align}
for all $i \in \until{N}$. Note that for all $y_i \in \real$ the
function $q_i^t$ is continuous and thus the maximum in~\eqref{eq:dual_prob_est}
exists finite. In a similar way, the dual problem associated
to~\eqref{eq:primal_decomp_subprob} is exactly as problem~\eqref{eq:dual_prob_est},
except that $q_i^t$ is replaced with the dual function associated
to~\eqref{eq:primal_decomp_subprob}, i.e.,
\begin{align*}
    q_i(\mu_i)
  = \begin{cases}
    \displaystyle\min_{x_i \in X_i} \big[ f_i(x_i) + \mu_i \big( g_i(x_i) - y_i \big)\big]
      & \text{if } \mu_i \le M
    \\
    -\infty & \text{otherwise}.
  \end{cases}
\end{align*}
\begin{lemma}\label{lemma:dual_function_convergence}
  Let Assumptions~\ref{ass:problem} and~\ref{ass:oracles}
  hold. Then, the dual function sequence $\{q_i^t\}_t$ converges
  to $q_i$, uniformly in $\mu_i \in D_i = \{ \mu_i : \mu_i \le M \}$
  and $y_i \in \real$, where $D_i$ is the domain of $q_i$ and $q_i^t$.
\end{lemma}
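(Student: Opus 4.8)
The plan is to reduce the claimed uniform convergence of the dual functions entirely to the uniform convergence of the cost estimates guaranteed by Assumption~\ref{ass:oracles}, never touching the outer optimization over $\mu_i$. The backbone is the elementary fact that a pointwise minimization is non-expansive in the supremum norm: for any two real-valued functions $a,b$ on a common set $S$ whose infima exist,
\[
  \Bigl| \inf_{s \in S} a(s) - \inf_{s \in S} b(s) \Bigr| \le \sup_{s \in S} |a(s) - b(s)|.
\]
This is immediate, since $a(s) \le b(s) + \sup_{s\in S}|a-b|$ for every $s$ gives $\inf_S a \le \inf_S b + \sup_S|a-b|$, and the reverse inequality follows by symmetry. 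First I would record this auxiliary bound and then specialize it to the two inner Lagrangians.

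The crucial step is the choice of $a$ and $b$. Fixing an agent $i$ (and, as in the rest of the section, suppressing the index), for a given $\mu \le M$ and $y \in \real$ I would set $a(x) = f^t(x) + \mu\bigl(g(x) - y\bigr)$ and $b(x) = f(x) + \mu\bigl(g(x) - y\bigr)$ on $S = X$, so that by compactness of $X$ (Assumption~\ref{ass:problem}) the infima are attained and equal $q_i^t(\mu_i)$ and $q_i(\mu_i)$, respectively. The key observation is that the coupling term $\mu(g(x) - y)$ is identical in $a$ and $b$ and therefore cancels in the difference, leaving $a(x) - b(x) = f^t(x) - f(x)$ for all $x \in X$, a quantity independent of both $\mu$ and $y$. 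Plugging this into the auxiliary bound yields
\[
  |q_i^t(\mu_i) - q_i(\mu_i)| \le \sup_{x_i \in X_i} |f_i^t(x_i) - f_i(x_i)| =: \epsilon_t
\]
for every $\mu_i \le M$ and every $y_i \in \real$. Since $\epsilon_t$ does not depend on $\mu_i$ or $y_i$, taking the supremum over $\mu_i \in D_i$ and $y_i \in \real$ leaves it unchanged, and Assumption~\ref{ass:oracles} (which is precisely $\epsilon_t \to 0$) then delivers the uniform convergence. For $\mu_i > M$ both functions equal $-\infty$, so there is nothing to prove, which is why the statement is phrased on the common domain $D_i$.

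I do not expect a genuine obstacle here; the proof is short. The only point that requires care is verifying that the error bound is \emph{simultaneously} uniform in $\mu_i$ and $y_i$: a priori one might worry that a large $|\mu_i|$ amplifies the estimation error and destroys uniformity. It is exactly the perfect cancellation of the Lagrangian coupling term that rules this out, collapsing the pointwise discrepancy of the two inner objectives to $f_i^t - f_i$ alone and letting the single quantity $\epsilon_t$ control the error across the entire domain. Making this cancellation explicit is the heart of the argument.
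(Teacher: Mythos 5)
Your proposal is correct and follows essentially the same route as the paper: both arguments exploit the fact that the coupling term $\mu_i\bigl(g_i(x_i)-y_i\bigr)$ is identical in the two inner Lagrangians, so the discrepancy between $q_i^t$ and $q_i$ is controlled by $\sup_{x_i\in X_i}|f_i^t(x_i)-f_i(x_i)|$ alone, which vanishes by Assumption~\ref{ass:oracles}. The only cosmetic difference is that you package the key step as an explicit ``the infimum is non-expansive in the sup norm'' lemma, whereas the paper inlines the same two-sided inequality by subtracting the bounds $q^t(\mu,y)\le f^t(x)+\mu(g(x)-y)$ and $q(\mu,y)\le f(x)+\mu(g(x)-y)$ evaluated at the respective minimizers.
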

\begin{proof}
  Since our aim is to prove uniformity
  with respect to both $\mu$ and $y$, in this proof we denote
  the functions as $q^t(\mu, y)$ and $q(\mu, y)$ to show explicitly
  the dependence of $q^t$ and $q$ on both $\mu$ and $y$.
  By definition, we have that, for any fixed $\mu \in D$ and $y \in \real$,
  \begin{align}
    q(\mu,y)
    &= \min_{x \in X} \big( f(x) + \mu (g(x) - y) \big)
    \nonumber
    \\
    &
    \le f(x) + \mu (g(x) - y),
    \hspace{0.5cm}
    \text{for all } x \in X,
  \label{eq:dual_fn_dim}
  \end{align}
  and also
  \begin{align}
    q^t(\mu,y)
    &= \min_{x \in X} \big( f^t(x) + \mu (g(x) - y) \big)
    \nonumber
    \\
    &
    \le f^t(x) + \mu (g(x) - y)
    \hspace{0.5cm}
    \text{for all } x \in X.
  \label{eq:dual_fn_n_dim}
  \end{align}
  By the uniform convergence of $\{f^t\}_t$ (cf. Assumption~\ref{ass:oracles}),
  we have that for all $\varepsilon > 0$ there exists $N > 0$ such that for all
  $t\geq N$ it holds
  \begin{align*}
    f^t(x) - f(x) < \varepsilon
    \hspace{0.5cm} \text{and} \hspace{0.5cm}
    f(x) - f^t(x) < \varepsilon,
  \end{align*}
  for all $x \in X$.
  Subtracting~\eqref{eq:dual_fn_dim} from~\eqref{eq:dual_fn_n_dim} and using
  the uniform convergence we obtain, for all $t \ge N$ and for any $\mu \in D$
  and $y \in \real$,
  \begin{align*}
    q^t(\mu,y) - q(\mu,y)
    &\le f^t(x) - f(x) \hspace{0.7cm} \text{for all } x \in X
    \nonumber
    \\
    &< \varepsilon.
  \end{align*}
  Similarly, subtracting~\eqref{eq:dual_fn_n_dim} from~\eqref{eq:dual_fn_dim},
  we obtain, for all $t \ge N$ and for any $\mu \in D$ and $y \in \real$,
  \begin{align*}
    q(\mu, y) - q^t(\mu, y) < \varepsilon.
  \end{align*}
  Since the previous results do not actually depend on the chosen $\mu$ or $y$, they are
  uniform in $\mu$ and $y$. Therefore we have proven that for all $\varepsilon > 0$ there exists
  $N \ge 0$ such that $|q^t(\mu) - q(\mu)| < \varepsilon$ for all $t \ge N$,
  uniformly in $y \in \real$ and $\mu \in D$.
\end{proof}

Owing to Lemma~\ref{lemma:primal_function_subgradient},
a subderivative of the time-varying primal function $p_i^t$ at any
$y_i \in \real$ is given by ${p_i^t}^\prime(y_i) = -\mu_i^t(y_i)$,
where $\mu_i^t(y_i)$ is a maximum of $q_i^t(\cdot, y_i)$ (with respect
to $\mu_i$) in the interval $0 \le \mu_i \le M$.
Since there may be several maxima,
we now introduce a tie-break rule to make the maximum unique (later it will
be formalized specifically for the algorithm, cf. Assumption~\ref{ass:tiebreak_rule}).
In particular, we assume that among all the maxima we always select the smallest
one, i.e., we define $\mu_i^t(y_i)$ as the function
\begin{align}
  \mu_i^t(y_i) \triangleq \min\Big\{ \argmax_{0 \le \mu_i \le M} \: q_i^t(\mu_i, y_i) \Big\},
\label{eq:tie_break_rule}
\end{align}
where here we intend the outer minimization as the selection of the smallest
number from the set of maxima returned by the $\argmax$ operator.
With this definition at hand, the subderivative ${p_i^t}^\prime$ defined above
is a well-defined function of $y_i$.
A similar definition holds for the subderivative of $p_i(y_i)$, %
\begin{align*}
  p_i^\prime(y_i) = -\mu_i(y_i) \triangleq -\min\Big\{ \argmax_{0 \le \mu_i \le M} \: q_i(\mu_i, y_i) \Big\}.
\end{align*}

\begin{lemma}\label{lemma:uniform_convergence_pi}
  Let Assumption \ref{ass:oracles} hold.
  Then, the subderivative function ${p_i^t}^\prime(y_i)$ converges uniformly to
  $p_i^\prime(y_i)$, i.e.
  for all $\eta > 0$ there exists $N > 0$ such that $|{p_i^t}^\prime(y_i) - p_i^\prime(y_i)| < \eta$
  for all $t \ge N$ and $y_i \in \real$.
\end{lemma}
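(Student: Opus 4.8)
The plan is to reduce the claim to the uniform convergence of the (smallest) dual maximizers and then to an $\argmax$-stability argument based on concavity. Since, by Lemma~\ref{lemma:primal_function_subgradient} together with the tie-break rule~\eqref{eq:tie_break_rule}, we have ${p_i^t}'(y_i) = -\mu_i^t(y_i)$ and $p_i'(y_i) = -\mu_i(y_i)$, it suffices to show that $\mu_i^t \to \mu_i$ uniformly on $\real$. First I would observe that Lemma~\ref{lemma:primal_function_properties} applies verbatim to $p_i^t$, because its proof only uses $g_i$, $X_i$ and $M$ and never the specific cost. Hence $p_i^t$ and $p_i$ share the breakpoints $\ymin_i,\ymax_i$: both are affine with slope $-M$ on $(-\infty,\ymin_i)$ and constant on $(\ymax_i,\infty)$. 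Consequently ${p_i^t}'(y_i)=p_i'(y_i)=-M$ for $y_i<\ymin_i$ and ${p_i^t}'(y_i)=p_i'(y_i)=0$ for $y_i>\ymax_i$, so the difference vanishes identically outside $[\ymin_i,\ymax_i]$ and the problem is reduced to proving uniform convergence on this compact interval.

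Next I would set up a sharpness-plus-squeeze argument on the compact box $(\mu_i,y_i)\in[0,M]\times[\ymin_i,\ymax_i]$. By Lemma~\ref{lemma:dual_function_convergence}, $q_i^t\to q_i$ uniformly in $(\mu_i,y_i)$, and each $q_i(\cdot,y_i)$ is concave on $[0,M]$. Fix $\eta>0$. The key quantitative step is to exhibit a \emph{uniform} gap $\gamma>0$, independent of $y_i$, such that $q_i(\mu_i(y_i),y_i)-q_i(\mu_i,y_i)\ge\gamma$ for every $y_i\in[\ymin_i,\ymax_i]$ and every $\mu_i\in[0,M]$ with $|\mu_i-\mu_i(y_i)|\ge\eta$. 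Granting this, I would choose $N$ so large that $\sup|q_i^t-q_i|<\gamma/3$ for all $t\ge N$. Then at $\mu_i(y_i)$ we have $q_i^t(\mu_i(y_i),y_i)>\max_{\mu_i}q_i(\mu_i,y_i)-\gamma/3$, whereas on the complement of the $\eta$-ball $q_i^t(\mu_i,y_i)<\max_{\mu_i}q_i(\mu_i,y_i)-2\gamma/3$; hence every maximizer of $q_i^t(\cdot,y_i)$ — in particular the smallest one, $\mu_i^t(y_i)$ — lies within $\eta$ of $\mu_i(y_i)$. Since this holds for all $y_i$, we obtain $|\mu_i^t(y_i)-\mu_i(y_i)|<\eta$ uniformly, which is the claim.

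The hard part will be establishing the uniform sharpness gap $\gamma$, and I expect this to be the main obstacle. To the left of $\mu_i(y_i)$ the bound is comfortable: since $\mu_i(y_i)$ is the \emph{smallest} maximizer and $q_i(\cdot,y_i)$ is concave, $q_i$ is strictly increasing immediately to the left, so points at distance $\ge\eta$ there are strictly below the maximum. The delicate side is the right: $q_i(\cdot,y_i)$ may be flat over a whole interval of maximizers (equivalently, $p_i$ may have a kink at $y_i$), in which case the naive gap degenerates to zero and, worse, a uniformly small perturbation of the cost can in principle shift the selected maximizer across the flat region. Controlling this requires more than plain uniform convergence of $q_i^t$: I would combine the joint continuity of $q_i$ on the compact box with the monotonicity of $y_i\mapsto\mu_i(y_i)$ inherited from convexity of $p_i$, and, if needed, a mild nondegeneracy on the estimates — for instance one precluding the $f_i^t$ from developing vanishing-width steep features near $\partial X_i$, as would be ensured by strict convexity of $f_i$ or by an equicontinuity-type control on the oracle outputs. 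Pinning down the minimal such condition and converting it into the uniform gap $\gamma(\eta)$ is where the real work lies.
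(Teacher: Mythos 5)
Your reduction is the same as the paper's: both pass from ${p_i^t}'=-\mu_i^t$ to proving uniform convergence of the smallest dual maximizers, using Lemma~\ref{lemma:dual_function_convergence}. Your preliminary step--observing that Lemma~\ref{lemma:primal_function_properties} holds verbatim for $p_i^t$ so that the subderivatives of $p_i^t$ and $p_i$ coincide outside $[\ymin_i,\ymax_i]$, reducing the claim to a compact interval--is correct and is a clean addition not present in the paper. But the core of your argument is not completed, and the obstruction you name is genuine: the uniform sharpness gap $\gamma(\eta)$ that your squeeze argument needs simply does not exist whenever $q_i(\cdot,y_i)$ has a nondegenerate interval of maximizers (equivalently, whenever $p_i$ has a kink at $y_i$). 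In that case points at distance $\eta$ to the right of the smallest maximizer can themselves be maximizers, the gap is zero, and no choice of $N$ in your argument prevents the selected maximizer of $q_i^t(\cdot,y_i)$ from sitting at the far end of the flat region. A one-dimensional example shows the danger is real, not just a failure of the method: if $q(\mu)=0$ on $[0,M]$ and $q^t(\mu)=\mu/t$, then $q^t\to q$ uniformly but the smallest maximizer of $q^t$ is $M$ while that of $q$ is $0$. Your closing paragraph concedes that resolving this requires extra hypotheses (strict convexity of $f_i$, equicontinuity of the oracle outputs, or some nondegeneracy) that the lemma does not assume, and you do not supply the argument. As it stands the proposal is therefore a proof only of the subcase where the dual maximizer is unique (and, via compactness of $[0,M]\times[\ymin_i,\ymax_i]$ and continuity, uniformly sharply so); the general case is left open.

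For comparison, the paper argues differently: it extracts a convergent subsequence of the uniformly bounded functions $\mu_i^t$, shows via Lemma~\ref{lemma:dual_function_convergence} that any limit function is a selection of maximizers of $q_i$, and then tries to show it is the smallest one by passing to the limit in the inequality $\mu_i^{t_n}(y_i)\le\mu$ for all $\mu$ in the argmax of $q_i^{t_n}$. That last step needs every element of $\argmax_\mu q_i(\mu,y_i)$ to be a limit of elements of $\argmax_\mu q_i^{t_n}(\mu,y_i)$ (lower semicontinuity of the argmax correspondence), which fails in exactly the flat-top scenario you isolated; the paper also does not justify that the subsequential convergence is uniform in $y_i$. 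So you have correctly located the weak point of this lemma--but locating it is not the same as closing it, and your proposal does not close it.
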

\begin{proof}
  To ease the notation, we drop the index $i$.
  Since ${p^t}^\prime(y) = -\mu^t(y)$,
  we need to prove that the function
  $\mu^t(y)$ converges uniformly to $\mu(y)$.
  By definition~\eqref{eq:tie_break_rule}, the function sequence $\{\mu^t(y)\}_{t \in \natural}$
  is uniformly bounded in $[0, M]$. Thus we can extract a convergent
  subsequence $\{\mu^{t_n}(y)\}_{n \in \natural}$ and denote by
  $\bar{\mu}(y)$ its limit function. Let us first show that the limit function
  maps each $y$ to a maximum of $q(\mu, y)$ over $\mu \in [0,M]$.
  For all $y \in \real$, by optimality of $\mu^{t_n}(y)$ for $q^{t_n}$ it holds
  \begin{align*}
    q^{t_n}(\mu(y), y) \le q^{t_n}(\mu^{t_n}(y), y),
    \hspace{1cm}
    \forall n \in \natural.
  \end{align*}
  By taking the limit as $n \to \infty$ and by using
  Lemma~\ref{lemma:dual_function_convergence}, we obtain for all
  $y \in \real$
  \begin{align*}
    q(\mu(y), y) \le q(\bar{\mu}(y), y).
  \end{align*}
  However, by optimality of $\mu(y)$ for $q$ it also holds
  $q(\mu(y), y) \ge q(\bar{\mu}(y), y)$ for all $y \in \real$.
  Thus, equality follows for all $y$ and therefore
  \begin{align}
    \bar{\mu}(y) \in \argmax_{\mu \in [0,M]} \: q(\mu, y),
    \hspace{1cm}
    \forall y \in \real.
  \label{eq:mu_bar_proof}
  \end{align}
  We finally need to show that $\bar{\mu}(y)$ is also the smallest number
  in $\argmax_{\mu \in [0,M]} \: q(\mu, y)$.
  Let us denote
  \begin{align*}
    Q^\star(y) &= \argmax_{0 \le \mu \le M} \: q(\mu, y),
    \\
    Q^t(y) &= \argmax_{0 \le \mu \le M} \: q^t(\mu, y).
  \end{align*}
  By~\eqref{eq:tie_break_rule}, for all $y \in \real$ it holds
  \begin{align*}
    \mu^{t_n}(y) \le \mu,
    \hspace{1cm}
    \forall \mu \in Q^t{t_n}(y).
  \end{align*}
  By taking the limit as $n$ goes to infinity and by
  using~\eqref{eq:mu_bar_proof}, we obtain for all $y \in \real$
  \begin{align*}
    \bar{\mu}(y) \le \mu,
    \hspace{1cm}
    \forall \mu \in Q^\star(y),
  \end{align*}
  and the proof follows.
\end{proof}

\subsection{Estimated subgradients are epsilon-subgradients}
The uniform convergence of the the estimated subgradients is not
enough to prove that Algorithm~\ref{alg:algorithm}
is able to asymptotically recover optimality.
However, it turns out that the subgradients of the time-varying primal
functions $p_i^t$ are so-called $\epsilon$-subgradients of
the true primal function $p_i$.
Formally, given a convex function $\map{\varphi(\theta)}{\real^n}{\real}$,
an $\epsilon$-subgradient of $\varphi$ at some $\theta_0 \in \real^n$,
is a vector $\subgrad_\epsilon \varphi(\theta_0) \in \real^n$ satisfying
\begin{align*}
  \varphi(\theta)
  \ge \varphi(\theta_0) + \subgrad_\epsilon \varphi(\theta_0)^\top (\theta - \theta_0) - \epsilon,
  \hspace{1cm}
  \forall \theta \in \real^n,
\end{align*}

In the following important proposition, we prove a central result for the
analysis.
\begin{proposition}
  Let Assumptions~\ref{ass:problem},~\ref{ass:slater} and~\ref{ass:oracles} hold.
  Then, there exists a sequence $\{\epsilon_i^t\}_{t \in \natural}$ of non-negative scalars
  such that for all $t \in \natural$ and $y_i \in \real$ it holds
  \begin{align}
    p_i(z) \ge p_i(y_i) + (z - y_i)^\top   {p_i^t}^\prime(y) - \epsilon_i^t,
    \hspace{0.7cm}
    \forall \: z \in \real.
  \label{eq:epsilon_subg_thm1}
  \end{align}
  Moreover, $\displaystyle\lim_{t \to \infty} \epsilon_i^t = 0$.
\label{prop:epsilon_subgr}
\end{proposition}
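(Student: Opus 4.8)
The plan is to transfer the \emph{exact} subgradient inequality that holds for the surrogate primal function $p_i^t$ onto the true primal function $p_i$, paying a price controlled by the uniform distance between the two functions. First I would establish that $p_i^t$ converges to $p_i$ \emph{uniformly} in $y_i$. This follows directly from Assumption~\ref{ass:oracles} together with the fact that the feasible set of the subproblem~\eqref{eq:primal_decomp_subprob} does not depend on the cost: for any fixed $y_i$, an optimal solution of one version is feasible for the other, so the two optimal values differ by at most $\sup_{x \in X_i}|f_i^t(x) - f_i(x)|$, which vanishes as $t \to \infty$ (and is attained by compactness, Assumption~\ref{ass:problem}). I would denote this uniform bound by $\zeta_i^t$, so that $|p_i^t(y_i) - p_i(y_i)| \le \zeta_i^t$ for all $y_i$, with $\zeta_i^t \to 0$.

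The second ingredient is that ${p_i^t}^\prime(y_i)$ is an \emph{exact} subgradient of $p_i^t$ at $y_i$: this is precisely Lemma~\ref{lemma:primal_function_subgradient} applied to the surrogate subproblem~\eqref{eq:primal_decomp_subproblem_est} (the multiplier characterization being valid thanks to convexity and Slater's condition, Assumption~\ref{ass:slater}), whence
\begin{align*}
  p_i^t(z) \ge p_i^t(y_i) + (z - y_i){p_i^t}^\prime(y_i), \qquad \forall z \in \real.
\end{align*}
The concluding step is a sandwich argument: using $p_i(z) \ge p_i^t(z) - \zeta_i^t$ on the left and $p_i^t(y_i) \ge p_i(y_i) - \zeta_i^t$ on the right of the previous inequality yields
\begin{align*}
  p_i(z) \ge p_i(y_i) + (z - y_i){p_i^t}^\prime(y_i) - 2\zeta_i^t, \qquad \forall z \in \real,
\end{align*}
which is exactly~\eqref{eq:epsilon_subg_thm1} with the choice $\epsilon_i^t = 2\zeta_i^t$; non-negativity and $\epsilon_i^t \to 0$ are then immediate from the properties of $\zeta_i^t$.

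The step I expect to be the crux is recognizing that one must \emph{not} start from the subgradient inequality for $p_i$ and simply substitute the estimated subderivative. That naive route produces an error term of the form $(z - y_i)\big(p_i^\prime(y_i) - {p_i^t}^\prime(y_i)\big)$, and since $z$ ranges over all of $\real$ the factor $(z - y_i)$ is unbounded, so the uniform closeness of subderivatives from Lemma~\ref{lemma:uniform_convergence_pi} alone can never deliver a \emph{uniform} $\epsilon_i^t$. The entire difficulty is sidestepped by anchoring the argument on the exact subgradient of $p_i^t$ and controlling only \emph{function values} through uniform convergence, so that no unbounded multiplicative factor ever enters the estimate.
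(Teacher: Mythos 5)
Your proof is correct, and it takes a genuinely different and substantially shorter route than the paper's. The paper never uses uniform convergence of the primal optimal values $p_i^t \to p_i$; instead it defines $\epsilon_i^t(y_0)$ as the smallest admissible error at each $y_0$, shows it is finite by exploiting the structure of $p_i$ from Lemma~\ref{lemma:primal_function_properties} (affine with slope $-M$ left of $\ymin_i$, constant right of $\ymax_i$), and then bounds it by constructing a three-piece piecewise-linear minorant $p_{y_0}^\textsc{aux}$ whose breakpoints confine the damage to a bounded interval, yielding $\epsilon_i^t(y_0) \le |p_i^\prime(y_0) - {p_i^t}^\prime(y_0)|\,|\ymin_i - \ymax_i|$ and invoking the uniform convergence of subderivatives (Lemma~\ref{lemma:uniform_convergence_pi}). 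Your sandwich argument replaces all of this machinery with the single observation that \eqref{eq:primal_decomp_subprob} and \eqref{eq:primal_decomp_subproblem_est} share the same feasible set, so $\sup_{y_i}|p_i^t(y_i) - p_i(y_i)| \le \zeta_i^t := \sup_{x\in X_i}|f_i^t(x)-f_i(x)|$, and then anchors the inequality on the \emph{exact} subgradient of $p_i^t$. This buys you an explicit, computable bound $\epsilon_i^t = 2\zeta_i^t$ tied directly to the oracle error (the paper's $\eta$ comes from a subsequence/compactness argument with no rate), it does not need Lemma~\ref{lemma:primal_function_properties}, Lemma~\ref{lemma:dual_function_convergence}, Lemma~\ref{lemma:uniform_convergence_pi}, or the tie-break selection of the multiplier (any Lagrange multiplier of the surrogate subproblem works), and your diagnosis of why the naive substitution into the subgradient inequality for $p_i$ fails — the unbounded factor $(z-y_i)$ — is exactly the obstruction the paper's auxiliary-function construction is built to overcome. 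The one implicit hypothesis you share with the paper is that the oracle outputs $f_i^t$ are convex, so that $p_i^t$ is convex and the multiplier characterization of its subgradient applies; this is not stated in Assumption~\ref{ass:oracles} but is assumed throughout the paper as well, so it is not a gap relative to the paper's own argument.
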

\begin{proof}
  To ease the notation, we drop the index $i$.
  We begin by proving that, for each fixed $t$ and $y \in \real$,
  there exists a finite number $\epsilon^t$
  satisfying~\eqref{eq:epsilon_subg_thm1}.
  We will then find an upper bound of $\epsilon^t$, independent of
  $y$, that goes to zero as $t$ goes to infinity, which yields the
  desired result.

  Fix $t \in \natural$ and $y_0\in \real$.
  Let us define $\epsilon^t(y_0)$ as the smallest non-negative number
  satisfying~\eqref{eq:epsilon_subg_thm1} at $y_0$, i.e.
  \begin{align}
  \begin{split}
    \epsilon^t(y_0) \!=\! \inf_{\epsilon} \:
    & \epsilon
    \\
    \subj \: &
    \epsilon \ge p(y_0) \!+\! (z\!-\!y_0) {p^t}^\prime(y_0) \!-\! p(z), \:
    \forall z \in \real.
  \end{split}
  \label{eq:epsilon_star}
  \end{align}
 By definition, $\epsilon^t(y_0) \geq 0$.
 We must prove that 
  the infimum in~\eqref{eq:epsilon_star} is attained
  at a real number (i.e. that $\epsilon^t(y_0) \ne +\infty$).
  The optimization problem~\eqref{eq:epsilon_star} is in
  epigraph form and can be equivalently rewritten as
  \begin{subequations}
  \begin{align}
     \epsilon^t(y_0) = \sup_{z \in \real} \big[p(y_0) + (z-y_0) {p^t}^\prime(y_0) - p(z)\big] .
  \label{eq:epsilon_star_comp}
  \end{align}
  Using the properties of the $\sup$, we can rewrite $\epsilon^t(y_0)$ as
  \begin{align}
    \epsilon^t(y_0)
    = -\inf_{z \in \real} r(z),
  \label{eq:delta_star}
  \end{align}
  \end{subequations}
  with
  \begin{align}
    r(z) = p(z) - z {p^t}^\prime(y_0) - p(y_0) + y_0 {p^t}^\prime(y_0).
  \label{eq:r_definition}
  \end{align}
  Note that $r(z)$ is also a function of $t$ and $y_0$, however
  we leave these arguments as implicit so as to keep the notation light.
  We now show that the minimum of $r(z)$ exists, which in turn
  implies that $\epsilon^t(y_0) \in \real$.
  To see this, first note that since $p(z)$ is convex then also $r(z)$ is convex.
  Let us study the subderivative of $r(z)$.
  By Lemma~\ref{lemma:primal_function_properties}, $p(y)$ is linear for
  $y \le \ymin$ and for $y \ge \ymax$.
  Thus, $r(z)$ is differentiable for all $z\in (-\infty, \ymin) \cup (\ymax, +\infty)$.
  For all $z \le \ymin$, it holds
  \begin{align*}
    r^\prime(z)
    = p^\prime(z) - {p^t}^\prime(y_0)
    =  -M - {p^t}^\prime(y_0) \le 0,
  \end{align*}
  where the last equality follows by Lemma~\ref{lemma:primal_function_properties} and the
  inequality follows by Lemma~\ref{lemma:p_bounded_negative_subgrad}.
  Analogously, for all $z \ge \ymax$, it holds
  \begin{align*}
    r^\prime(z)
    = p^\prime(z) - {p^t}^\prime(y_0)
    =  - {p^t}^\prime(y_0) \ge 0.
  \end{align*}
  Thus $r(z)$ is non increasing for $z \le \ymin$
  and non decreasing for $z \ge \ymax$.
  Being the function convex (and thus continuous), there
  exists a (finite) minimum in the interval $[\ymin, \ymax]$.
  i.e.,
  \begin{align*}
    \min_{z \in \real} r(z)
    &= \min_{z \in [\ymin, \ymax]} r(z).
  \end{align*}
  Thus, $\epsilon^t(y_0) \in \real$ since the $\inf$ in \eqref{eq:epsilon_star}
  is finite.

  Now we proceed to %
  compute a vanishing
  overestimate of $\epsilon^t(y_0)$.
  Consider the sequence
  $\{{p^t}^\prime(y_0)\}_{t \in \natural}$ and fix $\beta > 0$.
  Define $\eta = \beta / |\ymin - \ymax| > 0$. By
  Lemma~\ref{lemma:uniform_convergence_pi}, %
  there exists $N > 0$ such that $|{p^t}^\prime (y_0) - p^\prime(y_0)| < \eta$
  for all $t \ge N$.
  To compute the overestimate of $\epsilon^t(y_0)$, we replace
  $p(z)$ with a convex, piece-wise linear underestimate
  $p_{y_0}^\textsc{aux}(z)$, defined as
  \begin{align*}
    p_{y_0}^\textsc{aux}(z)
    = \max \Big\{&
      p(\ymin) + M (\ymin - y),
    \\
      &p(y_0) + p^\prime(y_0) (y - y_0) , \:\:\:
      p(\ymax) \Big\}.
  \end{align*}
  The resulting function consists of three pieces.
  The left-most piece and the right-most piece are obtained
  by prolonging the two lateral linear pieces of $p(z)$ inside the
  interval $[\ymin, \ymax]$, while the central piece
  is the tangent line crossing $p(z)$ at $y_0$. By construction,
  this function satisfies $p_{y_0}^\textsc{aux}(z) \le p(z)$ for all $z \in \real$.
  Let us compute the break points, which we denote by $y^L$ and $y^R$
  (see Figure~\ref{fig:p_aux}).
  To compute $y^L$, we must intersect the first two pieces, i.e.
  \begin{align*}
    \underbrace{
      -M y^L + p(\ymin) + M \ymin
    }_\text{left piece}
    =
    \underbrace{
      p(y_0) + (y^L - y_0) p^\prime(y_0)
    }_\text{central piece},
  \end{align*}
  which results in
  \begin{align}
    y^L = \frac{p(\ymin) + M \ymin - p(y_0) + y_0 p^\prime(y_0)}{p^\prime(y_0) + M}.
  \label{eq:y_L_comp}
  \end{align}
  Similarly, we can compute $y^R$, which is equal to
  \begin{align}
    y^R = y_0 + \frac{p(\ymax) - p(y_0)}{p^\prime(y_0)}.
  \label{eq:y_R_comp}
  \end{align}
  Notice that the value of $p_{y_0}^\textsc{aux}(y^L)$ is equal to $p(\ymax)$.
  
   \begin{figure}[htbp]\centering
	  \includegraphics[scale=1]{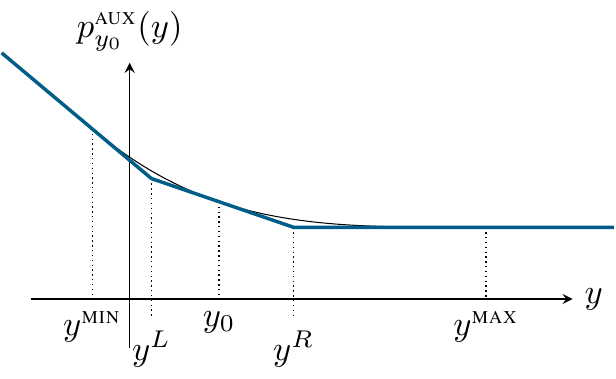}
	  \caption{
	    Graphical representation of the surrogate function $p_{y_0}^\textsc{aux}(z)$ (in blue).
	    The original primal function $p(z)$ is the black curve near the blue one.
	  }
	\label{fig:p_aux}
	\end{figure}

  Now, %
  similarly to $r(z)$,
  let us define functions $r^{\textsc{aux},t}(z)$ corresponding to $p^t(z)$
  for all $t$.
  Then,
  we use them to compute the upper bound on $\epsilon^t(y_0)$,
  in a similar way as in~\eqref{eq:epsilon_star_comp}--\eqref{eq:delta_star}.
   The functions $r^{\textsc{aux},t}(z)$ are defined as
  \begin{align}
    r^{\textsc{aux},t}(z) = p_{y_0}^\textsc{aux}(z) - z {p^t}^\prime(y_0) - p(y_0) + y_0 {p^t}^\prime(y_0),
  \label{eq:r_aux}
  \end{align}
  for all $t \ge 0$.
  As before, these functions also depend on $y_0$, which is
  omitted in the notation because it is fixed.
  It holds $r^{\textsc{aux},t}(z) \le r(z)$ (since $p_{y_0}^\textsc{aux}(z) \le p(z)$).
  Being $p_{y_0}^\textsc{aux}(z)$ piece-wise linear with three pieces, then also $r^{\textsc{aux},t}(z)$
  is piece-wise linear with three pieces. Similarly to~\eqref{eq:delta_star}, let us now
  define the overestimate of $\epsilon^t(y_0)$ as
  \begin{align*}
    \epsilon^{\textsc{aux},t}(y_0)
    \triangleq -\inf_{z \in \real} r^{\textsc{aux},t}(z)
    = -\min_{z \in [y^L, y^R]} r^{\textsc{aux},t}(z),
  \end{align*}
  where the equality holds since the function $r^{\textsc{aux},t}(z)$
  admits minimum in $[y^L, y^R]$ (by following the same reasoning
  used for $r(z)$).
  Since $r^{\textsc{aux},t}(z) \le r(z)$ for all $z$, the same holds
  for the minimum of such functions over $[y^L, y^R]$,
  from which we see that indeed
  $\epsilon^{\textsc{aux},t}(y_0) \ge \epsilon^t(y_0)$.
  Since $r^{\textsc{aux},t}(z)$ is linear in the interval $[y^L, y^R]$,
  the minimum is attained either at $y^L$ or at $y^R$:
  \begin{align}
    \epsilon^{\textsc{aux},t}(y_0) = - \min\Big\{ r^{\textsc{aux},t}(y^L), \:\: r^{\textsc{aux},t}(y^R) \Big\}.
  \label{eq:epsilon_aux_min}
  \end{align}
  Let us compute the value of the function at $y^L$ and $y^R$, i.e.
  \begin{align*}
    r^{\textsc{aux},t}(y^L)
    &=
    p_{y_0}^\textsc{aux}(y^L) - y^L {p^t}^\prime(y_0) - p(y_0) + y_0 {p^t}^\prime(y_0)
    \\
    &=
    p(y_0) + p^\prime(y_0) (y^L - y_0) - y^L {p^t}^\prime(y_0)
    \\
    &\hspace{0.5cm}- p(y_0) + y_0 {p^t}^\prime(y_0)
    \\
    &=
    (p^\prime(y_0) - {p^t}^\prime(y_0)) (y^L - y_0),
  \end{align*}
  and, similarly,
  \begin{align*}
    r^{\textsc{aux},t}(y^R)
    &=
    (p^\prime(y_0) - {p^t}^\prime(y_0)) (y^R- y_0),
  \end{align*}
  which always have opposite sign since $y^L \le y_0 \le y^R$.
  Thus, we can distinguish two cases.
  If $r^{\textsc{aux},t}(y^L) \le 0$, then the minimum in~\eqref{eq:epsilon_aux_min}
  is attained at $r^{\textsc{aux},t}(y^L)$ and therefore
  \begin{align*}
    \epsilon^{\textsc{aux},t}(y_0)
    =
    - r^{\textsc{aux},t}(y^L)
    &=
    \underbrace{-(p^\prime(y_0) - {p^t}^\prime(y_0)) (y^L - y_0)}_{\ge 0}
    \\
    &= \underbrace{|p^\prime(y_0) - {p^t}^\prime(y_0)|}_{\le \eta \:\: \forall t \ge N} \underbrace{|y^L - y_0|}_{\le |\ymin - \ymax|}
    \\
    & \le \eta |\ymin - \ymax|, \hspace{1cm} \forall t \ge N.
  \end{align*}
  Likewise, if $r^{\textsc{aux},t}(y^R) \le 0$, we obtain
  \begin{align*}
    \epsilon^{\textsc{aux},t}(y_0)
    =
    - r^{\textsc{aux},t}(y^R)
    =
    -(p^\prime(y_0) - {p^t}^\prime(y_0)) (y^R - y_0)
    \\
    \le \eta |\ymin - \ymax|, \hspace{0.5cm} \forall t \ge N.
  \end{align*}
  In either cases, it holds
  \begin{align*}
    \epsilon^t(y_0) \le \epsilon^{\textsc{aux},t}(y_0) \le \underbrace{\eta |\ymin - \ymax|}_{\beta}, \hspace{1cm} \forall t \ge N.
  \end{align*}
  which is independent of the chosen $y_0$. Thus we conclude
  \begin{align}
    0 \le \epsilon^t \le \max_{y \in \real} \epsilon^t(y) \le \max_{y \in \real} \epsilon^{\textsc{aux},t}(y) \le \beta, \hspace{0.5cm} \forall t \ge N.
  \label{eq:prop_epsilon_subg_final}
  \end{align}
  Since $\beta > 0$ is arbitrary, it follows that
  $\displaystyle\lim_{t \to \infty} \epsilon^t = 0$.
\end{proof}

Note that, in order for Proposition~\ref{prop:epsilon_subgr} to hold,
Assumption~\ref{ass:oracles} is important. Indeed, if Assumption~\ref{ass:oracles}
does not hold, it can be seen that in the previous proof that
Lemma~\ref{lemma:uniform_convergence_pi} could not be applied, and thus
one could use the fact that $|{p^t}^\prime (y_0) - p^\prime(y_0)| < \eta$.
As a consequence,~\eqref{eq:prop_epsilon_subg_final} would not be valid
and it would not be possible to conclude that $\lim_{t \to \infty} \epsilon^t = 0$.

\section{Algorithm Analysis: Convergence Result}
\label{sec:analysis}

In this section, we provide the main theoretical result
for the convergence analysis of the distributed algorithm.
The line of proof is based on the ideas in~\cite{camisa2020distributed},
however we will need to make the necessary modifications to the analysis
since in the local problems~\eqref{eq:alg_local_prob} we replaced
the true cost function with an estimated version.
Before introducing the main theoretical result, let us recall
the needed results.

\subsection{Unconstrained Formulation of Master Problem}
In order to analyze Algorithm~\ref{alg:algorithm}, we proceed to
perform a graph-induced reformulation of the master
problem~\eqref{eq:primal_decomp_master} that makes it
amenable to distributed computation.
Formally, consider the communication graph $\GG = (V, \EE)$.
For all edges $(i,j) \in \EE$, let $z_{ij} \in \real$ be a vector associated
to the edge $(i,j)$ and denote by $z \in \real^{|\EE|}$ the vector stacking
all $z_{ij}$.
Consider the change of coordinates for problem~\eqref{eq:primal_decomp_master}
defined through the following linear mapping
\begin{align}
  y_i
  = \sum_{j \in \nbrs_i} \! (z_{ij} - z_{ji}) + \frac{b}{N},
  \hspace{0.5cm}
  \forall \: i \in \until{N}.
\label{eq:change_of_coordinate}
\end{align}
The main point in introducing these new variables is that
they implicitly encode the constraint of the master problem, i.e.,
\begin{align*}
  \sum_{i=1}^N y_i = \sum_{i=1}^N \sum_{j \in \nbrs_i} \! (z_{ij} - z_{ji}) + b = b,
\end{align*}
which follows by the assumption that $\GG$ is undirected.
Let us apply this change of coordinates to
problem~\eqref{eq:primal_decomp_master}. Formally, for all $i \in \until{N}$,
define the functions
\begin{align*}
  \tp_i \big( \{z_{ij}, z_{ji}\}_{j \in \nbrs_i} \big)
  \triangleq
  p_i\bigg[ \sum_{j \in \nbrs_i} \! (z_{ij} - z_{ji}) + \frac{b}{N} \bigg],
  \hspace{0.2cm}
  z \in \real^{|\EE|}.
\end{align*}
In the next lemma we recall the formal equivalence
of the master problem with its unconstrained version.
\begin{lemma}[{\cite[Corollary 4.2]{camisa2020distributed}}]
\label{lemma:equivalence_master_prob_z}
  Problem~\eqref{eq:primal_decomp_master} is equivalent to the
  unconstrained optimization problem
  \begin{align}
	  \min_{z \in \real{|\EE|}} \:
	    \sum_{i=1}^N
	    \tp_i \big( \{z_{ij}, z_{ji}\}_{j \in \nbrs_i} \big),
	\label{eq:problem_z}
	\end{align}
	in the sense that \emph{(i)} the optimal costs are equal, and
  \emph{(ii)} if $\{z_{(ij)}^\star\}_{(i,j) \in \EE}$ is optimal
  for~\eqref{eq:problem_z}, then $y_i^\star = (z_{ij}^\star - z_{ji}^\star) + b/N$
  for all $i \in \until{N}$ is an optimal solution of~\eqref{eq:primal_decomp_master}.
	\oprocend
\end{lemma}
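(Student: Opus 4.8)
The plan is to show that the affine change of coordinates~\eqref{eq:change_of_coordinate} maps the space of edge variables $\real^{|\EE|}$ \emph{onto} the feasible set of the master problem~\eqref{eq:primal_decomp_master}, while leaving the objective value unchanged by the very definition of $\tp_i$. Once surjectivity onto the feasible set is established, the equivalence of the two problems follows almost immediately, so the whole argument reduces to a statement about the image of a linear map on a connected graph.

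First I would record the inclusion already computed in the text: for \emph{any} $z \in \real^{|\EE|}$, the induced vector $y$ with $y_i = \sum_{j \in \nbrs_i}(z_{ij} - z_{ji}) + b/N$ satisfies $\sum_{i=1}^N y_i = b$, since the antisymmetric terms $z_{ij} - z_{ji}$ cancel in pairs when summed over all agents (using that $\GG$ is undirected). Hence every $z$ produces a feasible allocation, and the image of the coordinate map is contained in the feasible set.

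The core step is the reverse inclusion: every feasible allocation is reachable. Given $y^\star$ with $\sum_i y_i^\star = b$, I would define the deviations $v_i = y_i^\star - b/N$, so that $\1^\top v = 0$. I would then recognize the linear operator $z \mapsto \big(\sum_{j \in \nbrs_i}(z_{ij} - z_{ji})\big)_{i \in \until{N}}$ as the action of the signed incidence matrix of $\GG$ on the net edge flows. For a connected undirected graph this matrix has rank $N-1$, and its range is exactly the hyperplane $\{v \in \real^N : \1^\top v = 0\}$. Since $v$ lies in this hyperplane, there exists a $z$ with $\sum_{j \in \nbrs_i}(z_{ij} - z_{ji}) = v_i$ for all $i$, i.e. a $z$ mapping to $y^\star$. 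Equivalently, one can argue constructively by routing the deviations $v_i$ along a spanning tree of $\GG$, which also pins down where connectivity is used.

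Finally I would assemble the equivalence. By construction $\sum_{i=1}^N \tp_i(\{z_{ij}, z_{ji}\}_{j \in \nbrs_i}) = \sum_{i=1}^N p_i(y_i)$ whenever $y$ is induced from $z$ via~\eqref{eq:change_of_coordinate}; combining the forward inclusion (every $z$ yields a feasible $y$) with surjectivity (every feasible $y$ arises from some $z$) shows that the set of attainable objective values coincides for the two problems, and taking infima gives \emph{(i)}. For \emph{(ii)}, an optimizer $z^\star$ of~\eqref{eq:problem_z} induces through~\eqref{eq:change_of_coordinate} a feasible $y^\star$ attaining the common optimal value, hence optimal for~\eqref{eq:primal_decomp_master}. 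The main obstacle is the surjectivity claim: this is precisely where connectivity of $\GG$ enters, through the rank-$(N-1)$ / range characterization of the incidence matrix that certifies every zero-sum deviation can be realized by edge variables.
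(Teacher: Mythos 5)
Your proof is correct, and it supplies an argument where the paper gives none: the lemma is only cited from an external reference, so there is no in-paper proof to compare against. Your decomposition into (a) the forward inclusion via pairwise cancellation of the antisymmetric terms $z_{ij}-z_{ji}$ and (b) surjectivity onto the hyperplane $\{v:\1^\top v=0\}$ via the rank-$(N-1)$ incidence matrix of a connected graph is exactly the standard route, and you correctly identify connectivity as the one place where the argument could fail. One minor remark: the statement of \emph{(ii)} in the lemma omits the sum over $j\in\nbrs_i$ in the formula for $y_i^\star$ (a typo relative to~\eqref{eq:change_of_coordinate}); your proof uses the correct expression $y_i^\star=\sum_{j\in\nbrs_i}(z_{ij}^\star-z_{ji}^\star)+b/N$.
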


In the following, we denote the cost function of~\eqref{eq:problem_z}
as $\tp(z) = \sum_{i=1}^N \tp_i \big( \{z_{ij}, z_{ji}\}_{j \in \nbrs_i} \big)$.

\subsection{Convergence Theorem}

We are now ready to formulate the main theoretical result.
We formalize the assumption on the tie-break rule, stating that among
all the Lagrange multipliers of problem \eqref{eq:alg_local_prob}
we choose the smallest one.
\begin{assumption}[Tie-break rule]
\label{ass:tiebreak_rule}
  At each iteration $t \ge 0$, each agent $i \in \until{N}$
  selects $\mu_i^t$ as the smallest Lagrange multiplier of
  problem~\eqref{eq:alg_local_prob}.
\end{assumption}
As regards the step-size, we make the following standard assumption.
\begin{assumption}
  \label{ass:stepsize}
  The step-size sequence $\{\alpha^t\}_{t \ge 0}$, with each $\alpha^t \ge 0$,
  satisfies $\sum_{t=0}^\infty \alpha^t \!=\! \infty$ and
  $\sum_{t=0}^\infty (\alpha^t)^2 \!< \!\infty$.\oprocend
\end{assumption}
The convergence theorem is reported next.
\begin{theorem}
\label{thm:convergence}
  Let Assumptions~\ref{ass:problem},~\ref{ass:slater},~\ref{ass:oracles},~\ref{ass:stepsize}
	and~\ref{ass:tiebreak_rule} hold.
  Then, assuming the allocation
  vectors $y_i^0$ are initialized such that
  $\sum_{i=1}^N y_i^0 = b$,
  for a sufficiently large $M > 0$ the sequences
  $\{ x_i^t \}_{t \ge 0}$ and $\{ y_i^t \}_{t \ge 0}$
  generated by Algorithm~\ref{alg:algorithm} for all
  $i \in \until{N}$ are such that
  \begin{enumerate}
    \item[(i)] the sequence $\{(y_1^t, \ldots\, y_N^t)\}_{t \ge 0}$
      converges to an optimal solution of problem~\eqref{eq:primal_decomp_master};
    
    \item[(ii)] $\lim_{t \to \infty} \sum_{i=1}^N f_i^t(x_i^t) + M \rho_i^t = f^\star$,
      where $f^\star$ is the optimal cost of~\eqref{eq:problem_original};
    
    \item[(iii)] every limit point of $\{(x_1^t, \ldots\, x_N^t)\}_{t \ge 0}$
      is an optimal solution of~\eqref{eq:problem_original}.
  \end{enumerate}
\end{theorem}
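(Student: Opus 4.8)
The plan is to recognize the allocation update~\eqref{eq:alg_update} as an $\epsilon$-subgradient method applied to the unconstrained reformulation~\eqref{eq:problem_z}, and then to invoke a convergence result for such methods. First I would introduce edge variables evolving as $z_{ij}^{t+1} = z_{ij}^t + \tfrac{\alpha^t}{2}(\mu_i^t - \mu_j^t)$, initialized so that the map~\eqref{eq:change_of_coordinate} returns $y_i^0$. A direct computation gives $z_{ij}^{t+1} - z_{ji}^{t+1} = (z_{ij}^t - z_{ji}^t) + \alpha^t(\mu_i^t - \mu_j^t)$, so the induced $y_i^t = \sum_{j \in \nbrs_i}(z_{ij}^t - z_{ji}^t) + b/N$ obeys exactly~\eqref{eq:alg_update} (the factor $\tfrac12$ is harmless, since halving the step-size preserves Assumption~\ref{ass:stepsize}). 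It then remains to identify the search direction with an $\epsilon$-subgradient of $\tp$. By Lemma~\ref{lemma:primal_function_subgradient}, ${p_i^t}^\prime(y_i^t) = -\mu_i^t$, and by Proposition~\ref{prop:epsilon_subgr} this is an $\epsilon_i^t$-subgradient of $p_i$ at $y_i^t$. Composing with the linear map~\eqref{eq:change_of_coordinate} (which places $+1$ on $z_{ij}$ and $-1$ on $z_{ji}$), the vector carrying $-\mu_i^t$ is an $\epsilon_i^t$-subgradient of $\tp_i$ at $z^t$; summing over $i$ and using that $\epsilon$-subgradients of a finite sum add up, the stacked direction with components $\mu_i^t - \mu_j^t$ on edge $z_{ij}$ is the negative of an $\epsilon^t$-subgradient of $\tp$ at $z^t$, where $\epsilon^t = \sum_{i=1}^N \epsilon_i^t \to 0$.

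With this identification, claim~(i) would follow from a convergence theorem for the $\epsilon$-subgradient method: the search directions are uniformly bounded because $\mu_i^t \in [0,M]$ (Lemma~\ref{lemma:p_bounded_negative_subgrad}), the step-size satisfies Assumption~\ref{ass:stepsize}, the minimizer set of~\eqref{eq:problem_z} is nonempty (Lemma~\ref{lemma:equivalence_master_prob_z} together with existence of an optimal allocation), and $\epsilon^t \to 0$. Invoking the theorem yields convergence of $z^t$ to a minimizer of~\eqref{eq:problem_z}, and applying~\eqref{eq:change_of_coordinate} with Lemma~\ref{lemma:equivalence_master_prob_z} gives convergence of $y^t$ to an optimal solution of~\eqref{eq:primal_decomp_master}. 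I expect this to be the main obstacle: the diminishing step-size alone readily yields $\liminf_t \tp(z^t) = \tp^\star$, but upgrading this to convergence of the iterates is precisely where the $\epsilon$-subgradient error must be controlled, so the delicate point is to verify that $\epsilon^t \to 0$ (rather than a stronger summability condition coupling $\epsilon^t$ with the step-size) suffices for the convergence result being used.

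For claim~(ii), I would note that $f_i^t(x_i^t) + M\rho_i^t = p_i^t(y_i^t)$ by construction of the local problem~\eqref{eq:alg_local_prob}, and that $p_i^t \to p_i$ uniformly on $\real$, a consequence of Lemma~\ref{lemma:dual_function_convergence} and strong duality since $p_i^t$ and $p_i$ are the optimal values of the corresponding dual programs. Then
\[
  |p_i^t(y_i^t) - p_i(y_i^\star)| \le \sup_{y}|p_i^t(y) - p_i(y)| + |p_i(y_i^t) - p_i(y_i^\star)| \to 0,
\]
using uniform convergence for the first term and continuity of $p_i$ with $y_i^t \to y_i^\star$ (from claim~(i)) for the second. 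Summing over $i$ and recalling $\sum_{i=1}^N p_i(y_i^\star) = f^\star$ from Lemma~\ref{lemma:relaxation_primal_decomposition} yields claim~(ii).

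Finally, for claim~(iii) I would take a limit point $x^\star$ of $\{(x_1^t,\dots,x_N^t)\}$ along a subsequence $t_k$, which exists since each $X_i$ is compact. Uniform convergence of $f_i^{t_k}$ and continuity give $f_i^{t_k}(x_i^{t_k}) \to f_i(x_i^\star)$, while the active-constraint relation $\rho_i^{t_k} = \max\{0,\,g_i(x_i^{t_k}) - y_i^{t_k}\}$ gives $\rho_i^{t_k} \to \rho_i^\star := \max\{0,\,g_i(x_i^\star) - y_i^\star\}$. Combining with claim~(ii) shows $\sum_{i=1}^N[f_i(x_i^\star) + M\rho_i^\star] = f^\star$; since $(x_i^\star,\rho_i^\star)$ is feasible for the relaxed problem~\eqref{eq:problem_relaxed} (using $g_i(x_i^\star) \le y_i^\star + \rho_i^\star$ and $\sum_i y_i^\star = b$), this makes it an optimal solution of~\eqref{eq:problem_relaxed}. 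For sufficiently large $M$ the optimal violation vanishes, so $\rho_i^\star = 0$ and $\sum_{i=1}^N g_i(x_i^\star) \le b$, i.e. $x^\star$ is feasible for~\eqref{eq:problem_original}; together with $\sum_{i=1}^N f_i(x_i^\star) = f^\star$ this establishes optimality.
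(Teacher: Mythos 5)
Your proposal is correct and follows essentially the same route as the paper: recasting the allocation update as an $\epsilon$-subgradient method on the reformulation~\eqref{eq:problem_z} via the edge variables~\eqref{eq:change_of_coordinate}, invoking Proposition~\ref{prop:epsilon_subgr} with $\epsilon^t \to 0$ together with Lemma~\ref{lemma:p_bounded_negative_subgrad} and Assumption~\ref{ass:stepsize} for claim~(i), and then deriving~(ii) and~(iii) from optimality of the limiting allocation and Lemma~\ref{lemma:relaxation_primal_decomposition}. If anything, you are more careful than the paper on the bookkeeping (the $\tfrac{1}{2}$ factor in the edge update, $\epsilon^t=\sum_i\epsilon_i^t$ rather than $\max_i\epsilon_i^t$) and you supply an explicit argument for claim~(ii), which the paper delegates to a citation; the delicate point you flag about iterate convergence under $\epsilon^t\to 0$ is likewise resolved in the paper only by reference to the literature.
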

\begin{proof}
  By Lemma~\ref{lemma:relaxation_primal_decomposition} and
  Lemma~\ref{lemma:equivalence_master_prob_z}, problem~\eqref{eq:problem_z}
  has the same optimal cost as problem~\eqref{eq:problem_original}.
  Recall that $p_i^t(y_i)$ denotes the optimal cost of
  problem~\eqref{eq:alg_local_prob} for all $i \in \until{N}$ and $t \ge 0$.
  Let us consider a subgradient method applied to problem~\eqref{eq:problem_z}.
  Instead of the standard subgradient method, we replace the subgradients
  of $\tp_i \big( \{z_{ij}^t, z_{ji}^t\}_{j \in \nbrs_i} \big)$ with a subgradient of
  $\tp_i^t \big( \{z_{ij}^t, z_{ji}^t\}_{j \in \nbrs_i} \big)
  \triangleq p_i^t[ \sum_{j \in \nbrs_i} \! (z_{ij}^t - z_{ji}^t) ]$
  and therefore consider the update
  \begin{align}
    z_{ij}^{t+1}
    & = z_{ij}^t - \alpha^t {\tp_i^t} \:\! ' \big( \{z_{ij}^t, z_{ji}^t\}_{j \in \nbrs_i} \big)
    \hspace{0.5cm}
    \forall (i,j) \in \EE,
  \label{eq:epsilon_subgradient_method}
  \end{align}
  initialized at some $z^0 \in \real^{|\EE|}$.
  As we will see in a moment, the update~\eqref{eq:epsilon_subgradient_method}
  is in fact an $\epsilon$-subgradient method applied to problem~\eqref{eq:problem_z}.
  By Lemma~\ref{lemma:primal_function_subgradient}, the subderivatives of
  $p_i^t$ at $\sum_{j \in \nbrs_i} (z_{ij}^t - z_{ji}^t)$ are equal to
  \begin{align*}
    {p_i^t}^\prime \bigg[\sum_{j \in \nbrs_i} (z_{ij}^t - z_{ji}^t) \bigg] =  -\mu_i^t,
  \end{align*}
  where $\mu_i^t$ is a Lagrange multiplier of problem~\eqref{eq:primal_decomp_subproblem_est}
  (with $y_i = \sum_{j \in \nbrs_i} (z_{ij} - z_{ji})$) associated to the constraint
  $g_i(x_i) \le y_i + \rho_i$. As shown in~\cite{camisa2020distributed},
  using the change of coordinates~\eqref{eq:change_of_coordinate} we can show that the
  subderivatives of $\tp_i^t$ are equal to
  \begin{align*}
    {\tp_i^t} \:\! ' \big( \{z_{ij}^t, z_{ji}^t\}_{j \in \nbrs_i} \big)
    =
    \mu_j^t - \mu_i^t,
    \hspace{1cm}
    \forall (i,j) \in \EE,
  \end{align*}
  from which it follows that the update~\eqref{eq:epsilon_subgradient_method}
  can be rewritten as
  \begin{align}
    z_{ij}^{t+1}
    & = z_{ij}^t + \alpha^t (\mu_j^t - \mu_i^t),
    \hspace{1cm}
    \forall (i,j) \in \EE.
  \label{eq:epsilon_subgradient_method_rewrite}
  \end{align}
  By Proposition~\ref{prop:epsilon_subgr}, together with
  Assumption~\ref{ass:tiebreak_rule} and
  Lemma~\ref{lemma:uniform_convergence_pi}, we finally see that the
  update~\eqref{eq:epsilon_subgradient_method_rewrite} is an $\epsilon$-subgradient
  method applied to problem~\eqref{eq:problem_z}, with $\epsilon^t = \displaystyle\max_i \epsilon_i^t$
  going to $0$ as $t$ goes to $\infty$. Thus, by following the arguments
  of~\cite[Section 3.3]{bertsekas2015convex}
  and by also using the boundedness of the subderivatives
  (Lemma~\ref{lemma:p_bounded_negative_subgrad})
  and Assumption~\ref{ass:stepsize},
  we conclude that the sequence $\{z^t\}_{t \ge 0}$ generated
  by~\eqref{eq:epsilon_subgradient_method_rewrite} converges to an optimal
  solution $z^\star$ of problem~\eqref{eq:problem_z}.
  
  Let us rewrite the update~\eqref{eq:epsilon_subgradient_method_rewrite}
  in terms of $y$ by using the change of
  coordinate~\eqref{eq:change_of_coordinate},
  \begin{align}
    y_i^{t+1}
    &= \sum_{i=1}^N \bigg[ \sum_{j \in \nbrs_i} (z_{ij}^t - z_{ji}^t) + b/N \bigg]
    \nonumber
    \\
    &= y_i^t + \alpha^t \sum_{j \in \nbrs_i^t} \big( \mu_i^t - \mu_j^t \big),
    \label{eq:epsilon_subgradient_method_y}
  \end{align}
  where we also used the fact that the graph is undirected and that
  $\sum_{i=1}^N y_i^t = b$ for all $t \ge 0$ (by induction).
  Note that~\eqref{eq:epsilon_subgradient_method_y} is exactly the algorithm
  update~\eqref{eq:alg_update}. Moreover, the sequence $\{y^t\}$
  is correctly initialized in Algorithm~\ref{alg:algorithm}, since
  by~\eqref{eq:epsilon_subgradient_method_y} it must hold
  $\sum_{i=1}^N y_i^0 = b$. Thus, we conclude that the sequence
  $\{y^t\}_{t \ge 0}$ converges to $y^\star$, with components equal to
  \begin{align*}
    y_i^\star = \sum_{j \in \nbrs_i} (z_{ij}^\star - z_{ji}^\star) + b/N,
    \hspace{1cm}
    i \in \until{N}.
  \end{align*}
  Thus, point \emph{(i)} follows by
  Lemma~\ref{lemma:equivalence_master_prob_z}.
  We have thus shown that the sequence $\{y^t\}_{t \ge 0}$ generated by
  Algorithm~\ref{alg:algorithm} converges to an optimal
  solution of problem~\eqref{eq:primal_decomp_master}.
  
  Having proven \emph{(i)}, point \emph{(ii)} can be proven with the
  same arguments as in~\cite[Theorem 2.5 (i)]{camisa2020distributed}.

  To prove \emph{(iii)}, 
Consider the primal sequence
$\{(x_1^t, \ldots, x_N^t, \rho_1^t, \ldots, \rho_N^t)\}_{t \ge 0}$ generated
by the Algorithm~\ref{alg:algorithm}.
By summing over $i \in \until{N}$ the inequality
$g_i(x_i^t) \le y_i^t + \rho_i^t \1$
(which holds by construction), it holds
\begin{align}
  \sum_{i=1}^N g_i(x_i^t)
  \le
  \sum_{i=1}^N y_i^t + \sum_{i=1}^N \rho_i^t \1
  =
  \sum_{i=1}^N \rho_i^t \1.
\label{eq:proof_primal_recovery_t}
\end{align}
Define $\rho^t = \sum_{i=1}^N \rho_i^t$.
By construction, the sequence $\{(x_1^t, \ldots, x_N^t, \rho^t)\}_{t \ge 0}$ is bounded
(as a consequence of point \emph{(ii)} and continuity of the functions $f_i^t(x_i)+ M\rho_i$), so that
there exists a sub-sequence of indices $\{t_h\}_{h \ge 0} \subseteq \{t\}_{t \ge 0}$
such that the sequence $\{(x_1^{t_n}, \ldots, x_N^{t_h}, \rho^{t_h})\}_{h \ge 0}$
converges. Denote the limit point of such sequence as
$(\bar{x}_1, \ldots, \bar{x}_N, \bar{\rho})$.
From point \emph{(ii)} of the theorem and by using the uniform
convergence of the objective functions (cf. Assumption~\ref{ass:oracles}),
it follows that
\begin{align*} %
  \sum_{i=1}^N f_i(\bar{x}_i) + M \bar{\rho} = f^\star.
\end{align*}
By Lemma~\ref{lemma:relaxation_primal_decomposition}, it must hold $\bar{\rho} = 0$.
As the functions $g_i$ are continuous, by taking the limit
in~\eqref{eq:proof_primal_recovery_t} as $h \to \infty$, with $t = t_h$, it holds
  $\sum_{i=1}^N g_i(\bar{x_i}) \le \bar{\rho} \1 = \0$.
Therefore, the point $(\bar{x}_1, \ldots, \bar{x}_N)$ is an optimal solution
of problem~\eqref{eq:problem_original}.
\end{proof}

\section{Numerical Computations}\label{sec:simulations}
In this section, we show numerical computations to corroborate
the theoretical results. 
To estimate the cost functions, we consider a method similar to the one in~\cite{simonetto2020smooth},
without all the machinery to guarantee smoothness and strong convexity.

Formally, fix an agent $i$ and consider $K \in \natural$ samples $z_i^1, \ldots, z_i^K \in X_i$.
To build the estimated function, we let the agent compute $\gamma_1, \ldots, \gamma_K \in \real^{n_i}$
by solving the feasibility problem
\begin{align}
\begin{split}
  \find \: &\: \gamma^1, \ldots, \gamma^K
  \\
  \subj \:
  & \: f_i(z_i^h) \!+\! (z_i^\ell \!-\! z_i^h)^\top \gamma^h \leq f_i(z_i^\ell),
  \\
  & \hspace{2cm} \text{for all } h,\ell = \interv{K}, \: h \ne \ell
\end{split}
\label{eq:feasibility_prob}
\end{align}
The purpose of problem~\eqref{eq:feasibility_prob} is to compute the slope of the linear pieces that build
up the estimation of $f_i$ and has $K$ variables and $K(K-1)$ convexity constraints.
With the solutions $(\gamma^1, \ldots, \gamma^K)$ at hand, the oracle returns the estimated functions in the following form:
\begin{align}
  f_i^K(x) = \max_{k \in \until{K}} \big\{ f_i(z_i^k) + (x - z_i^k)^\top \gamma^k \big\},
\label{eq:estimated_function}
\end{align}
for all $i \in \until{N}$.
Note that $f_i^K(\cdot)$ is a piece-wise linear function.
Moreover, $f_i^K$ is the pointwise maximum of a finite number of affine functions, its epigraph is a non-empty polyhedron, and hence $f_i^K$ is convex, closed and proper
(see Theorem 1 in \cite{taylor2017smooth}).

At each iteration, each agent $i$ collects a new sample of the domain. Initially the samples are independent and identically distributed in the whole domain. As more points are added to the model, we start to reduce the space where sampling new points into balls centered in the current approximated solution. In fact, restricting the sampling space, force the model to refine the surrogate function in those area containing the approximated solution.
We experimentally tested that, thanks to the latter expedient, it is possible to reduce the number of samples to keep in the memory.
In fact, if the cost function becomes interesting only near the minimum, from a certain point on, estimating the part of the function far from the minimum becomes irrelevant, and all samples far from the minimum can be canceled. By following this intuition, together with the fact that the sample space is shrinking more and more around the potential solution, the samples collected further away in terms of time can be removed from the model. This relieves the function estimation, whose complexity is dependent on the number of points used.

We consider a network of $N=10$ agents in a 3-dimensional domain. 
We generate a random Erd\H{o}s-R\'enyi graph with edge probability $0.2$.
We consider quadratic local functions $f_i$ and linear local constraints $g_i$.
Figure \ref{fig:cost_evolution_original} shows the evolution of the algorithm in terms of cost error $ |f^\star - \sum_{i=1}^N f_i(x_i^t) | $.
Despite the use of surrogate cost functions, the objective value converges to the optimal cost of the original problem with known cost function, as we expected from the theoretical results. 

\begin{figure}[!htbp]
  \centering
  \includegraphics[scale=1]{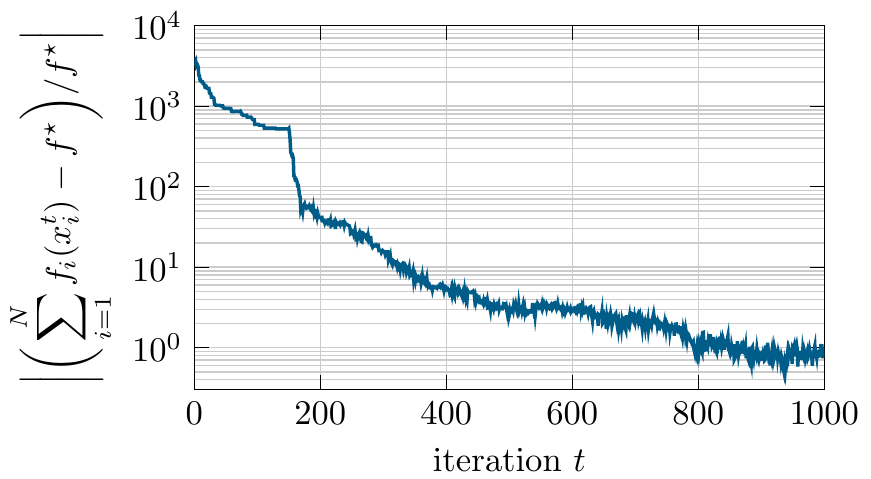}
  \caption{Evolution of the cost error in the numerical example.}
\label{fig:cost_evolution_original}
\end{figure}

\section{Conclusions}\label{sec:conclusions}

In this paper, we considered a challenging distributed optimization
scenario arising in several control problems of interest.
We focused on constraint-coupled optimization problems
with unknown cost functions to be learned and we proposed
a distributed optimization algorithm that only uses estimated
versions of the cost functions. We performed a thorough exploration
of the primal decomposition approach, by which concluded
that the distributed algorithm can be recast as an epsilon-subgradient
that asymptotically recovers consistency and provides an
optimal solution to the original problem.

\appendix
\subsection{Proof of Lemma~\ref{lemma:primal_function_subgradient}}
\label{app:proof_primal_func_subgradient}
  Fix an agent $i$. By Lemma~\ref{lemma:primal_function_subgradient},
  it holds $p_i^\prime(y_i) = -\mu_i(y_i)$, where $\mu_i(y_i)$
  is a Lagrange multiplier of problem~\eqref{eq:primal_decomp_subprob}
  associated to the constraint $g_i(x_i) \leq y_i + \rho_i$.
  Let us derive the dual function,
  \begin{align*}
    q_i(\mu_i)
    &= \inf_{x_i \in X_i, \: \rho_i \ge 0} \Big( f_i(x_i) + M \rho_i + \mu_i(g_i(x_i) - y_i - \rho_i) \Big)
    \\
    &=
    \begin{cases}
      \displaystyle\min_{x_i \in X_i} \: \Big(f_i(x_i) + \mu_i(g_i(x_i) - y_i)\Big)
        & \hspace{0.3cm}\text{if } \mu_i \le M,
      \\
      -\infty
        & \hspace{0.3cm}\text{otherwise}.
    \end{cases}
  \end{align*}
  Thus, the dual problem reads
  \begin{align*}
    \max_{0 \le \mu_i \le M} \: & \: q_i(\mu_i),
  \end{align*}
  from which we see that any optimal solution satisfies
  $0 \le \mu_i(y_i) \le M$. Therefore, it follows that
  $-M \le p_i^\prime(y_i) \le 0$.
  \oprocend

\bibliographystyle{IEEEtran}
\bibliography{references}

\end{document}